\DeclarePairedDelimiter{\ceil}{\lceil}{\rceil}
\newtheorem{theorem}{Theorem}[section]
\newtheorem{lemma}[theorem]{Lemma}
\newtheorem{remark}[theorem]{Remark}
\DeclareMathOperator{\argmin}{argmin}
\newcommand{\N}{\mathcal{N}}
\newcommand{\R}{\mathbb{R}}
\newcommand{\G}{\mathcal{G}}
\newcommand{\D}{\mathcal{D}}
\newcommand{\cL}{{\mathcal{L}}}
\newcommand{\bi}{\begin{itemize}}
\newcommand{\ei}{\end{itemize}}
\newcommand{\ba}{\begin{array}}
\newcommand{\ea}{\end{array}}
\begin{document}

\title{\textbf{Sub-sampled Trust-Region Methods with Deterministic Worst-Case Complexity Guarantees}}


 \author{
 Max L. N. Gon\c calves\thanks{Instituto de Matem\'atica e Estat\'istica, Universidade Federal de Goi\'as,  Goi\^ania-GO, 74001-970,  Brazil, E-mail: \texttt{maxlng@ufg.br}. This author was partially supported by FAPEG (Grant 405349/2021-1) and CNPq (Grant 304133/2021-3).} \and  Geovani N. Grapiglia\thanks{Universit\'e catholique de Louvain, ICTEAM/INMA, Avenue Georges Lema\^itre, 4-6/ L4.05.01, B-1348, Louvain-la-Neuve, Belgium. E-mail: \texttt{geovani.grapiglia@uclouvain.be}. This author was partially supported by FRS-FNRS, Belgium (Grant CDR J.0081.23).} 
 }

\maketitle

\begin{abstract}
In this paper, we develop and analyze sub-sampled trust-region methods for solving finite-sum optimization problems. These methods employ subsampling strategies to approximate the gradient and Hessian of the objective function, significantly reducing the overall computational cost. We propose a novel adaptive procedure for deterministically adjusting the sample size used for gradient (or gradient and Hessian) approximations. Furthermore, we establish worst-case iteration complexity bounds for obtaining approximate stationary points. More specifically, for a given $\varepsilon_g, \varepsilon_H\in (0,1)$, it is  shown that an $\varepsilon_g$-approximate first-order stationary point  is reached  in at most $\mathcal{O}({\varepsilon_g}^{-2} )$  iterations, whereas an  $(\varepsilon_g,\varepsilon_H)$-approximate second-order stationary point  is reached  in at most $\mathcal{O}(\max\{\varepsilon_{g}^{-2}\varepsilon_{H}^{-1},\varepsilon_{H}^{-3}\})$ iterations. Finally, numerical experiments illustrate the effectiveness of our new subsampling technique.
\\[2mm]
{\bf Keywords:} finite-sum optimization problems;
trust-region method; subsampling strategy;  iteration-complexity analysis.
\end{abstract}

\section{Introduction}\label{sec:1}

\noindent\textbf{Motivation and Contributions.} We consider the finite-sum minimization problem  
\begin{equation}
\min_{x\in\mathbb{R}^{n}}\,f(x):=\frac{1}{d}\sum_{i=1}^{d}f_{i}(x), \label{prob1}
\end{equation}
where each $f_{i}:\mathbb{R}^{n}\to\mathbb{R}$ is twice continuously differentiable, but possibly nonconvex.  
Problems of the form~\eqref{prob1} lie at the heart of data fitting applications, where the decision variable~$x$ typically represents the parameters of a model, and each function~$f_i(x)$ measures the discrepancy between the model's prediction and the $i$th data point. In this context, (\ref{prob1}) corresponds to minimizing the average prediction error over $d$ data points. 

When the number $d$ of component functions in (\ref{prob1}) is large, the computation of 
\begin{equation*}
\nabla f(x)=\frac{1}{d}\sum_{i=1}^{d}\nabla f_{i}(x)\quad\text{and}\quad \nabla^{2}f(x)=\frac{1}{d}\sum_{i=1}^{d}\nabla^{2}f_i(x),    
\end{equation*}
may become excessively expensive, which severely impacts the performance of first- and second-order methods applied to solve~\eqref{prob1}. To mitigate this issue, several \textit{sub-sampled optimization methods} have been proposed in recent years (e.g., \cite{ BEL5,BEL6,10.1093,BEl3,BEL2,JMLR:v23:20-910,Curtis,serafino,Max2024,OJMO_2025__6__A5_0,11850,Pardalos1,Bell3,Lan1,Roosta2020,Roosta2021}). These methods rely on inexact derivative information computed via subsampling. Specifically, given a point~$x$, they approximate the gradient and/or Hessian as
\begin{equation}
\nabla f_{\mathcal{G}} (x) = \frac{1}{|\mathcal{G}|} \sum_{i \in \mathcal{G}} \nabla f_i(x), \quad \text{and} \quad
\nabla^2 f_{\mathcal{H}}(x) = \frac{1}{|\mathcal{H}|} \sum_{i \in \mathcal{H}} \nabla^2 f_i(x), 
\label{subsamGradi}
\end{equation}
where \( \mathcal{G}, \mathcal{H} \subset \{1, \ldots, d\} \) are sub-samples of the data indices, and \( |\mathcal{G}| \) and \( |\mathcal{H}| \) denote their respective cardinalities, with
\begin{equation*}
    f_{\mathcal{G}}(x)=\dfrac{1}{|\mathcal{G}|}\sum_{i\in \mathcal{G}}\,f_{i}(x).
\end{equation*}
Usually, the samples are chosen randomly, with either adaptive or predefined control over their cardinality. With randomized samples, worst-case complexity bounds are typically established in expectation or with high probability (e.g., \cite{BEL5,BEL6,Cartis,JMLR:v23:20-910,Curtis,serafino,11850,Pardalos1,Lan1,Roosta2021}). On the other hand, with a predefined schedule for the sample sizes, one can recover deterministic complexity guarantees, provided that the full sample size is eventually reached (e.g., \cite{BEl3,BEL2,Max2024,Bell3}).

In the present work, we explore a different avenue based on \textit{deterministic error bounds} for sub-sampled gradient and Hessian approximations, where the accuracy is determined by the cardinality of the samples. Leveraging these error bounds, we develop and analyze sub-sampled trust-region methods for solving finite-sum optimization problems, with exact function evaluations. The sample sizes are selected \textit{deterministically} and in a \textit{fully adaptive} manner. We establish worst-case iteration complexity bounds for obtaining approximate first- and second-order stationary points. Specifically, for given tolerances $\varepsilon_g, \varepsilon_H \in (0,1)$, we show that our first-order trust-region method requires at most $\mathcal{O}(\varepsilon_g^{-2})$ iterations to find a point $\bar{x}$ such that
\[
\|\nabla f(\bar{x})\| \leq \varepsilon_g,
\]
assuming Lipschitz continuity of the gradients. In addition, assuming also Lipschitz continuity of Hessians, we show that our second-order trust-region method requires at most $\mathcal{O}(\max\{\varepsilon_g^{-2} \varepsilon_H^{-1}, \varepsilon_H^{-3}\})$ iterations to find a point $\bar{x}$ satisfying
\[
\|\nabla f(\bar{x})\| \leq \varepsilon_g \quad \text{and} \quad \lambda_{\min}(\nabla^2 f(\bar{x})) \geq -\varepsilon_H.
\]
Finally, we present numerical results that illustrate the potential benefits of our new subsampling strategy.
\\[0.3cm]
\noindent\textbf{Contents.} The paper is organized as follows. Section~\ref{sec:2}  describes  the main assumptions made  throughout this work and establishes  crucial auxiliary results.  Section \ref{sec:3}  presents  and analyzes a sub-sampled trust region method for obtaining approximate first-order stationary  points of $f(\,\cdot\,)$, whereas Section~\ref{sec:4} is devoted to present an extension  of this first algorithm  for obtaining approximate second-order stationary  points. Section~\ref{NumSec} presents preliminary numerical experiments and some concluding remarks are given in Section \ref{concluding}
\\[0.3cm]
\noindent\textbf{Notation.} The symbol $\|\,\cdot\,\|$ denotes the $2$-norm for vectors or matrices (depending on the context). The Euclidian inner product of $x,y\in\mathbb{R}^{n}$ is denoted by $\langle x,y\rangle$. For a given $z\in \R_+$, we set $\ceil{z}:=\min\{x\in \mathbb{Z}_{++}: x\geq z\}$.  Furthermore, the identity matrix of $\mathbb{R}^{n\times n}$ is denoted by $I$, and for any symmetric matrix $A\in\mathbb{R}^{n\times n}$, $\lambda_{\min}\left(A\right)$ is the minimum eigenvalue of $A$. We denote $\mathcal{N}=\left\{1,\ldots,d\right\}$, and for a given subsample  $\mathcal{G}\subset \N$, {$\mathcal{N}\setminus\mathcal{G}$ denotes the set $\{i\in\left\{1,\ldots,d\right\}: i \notin \mathcal{G}\}$}.

\section{Assumptions and Auxiliary Results}\label{sec:2}
This subsection presents the main assumptions made  throughout this work and establishes  crucial auxiliary results. We begin by proving a technique  result.
\begin{lemma}
\label{lem:1}
 Given $x\in\mathbb{R}^{n}$ and $h\in [0,1]$,  let $\mathcal{G} \subset \N$ be  such that  $|\mathcal{G}|\geq\ceil{(1-h)|\N|}$. Then, 
 \begin{equation}
\|\nabla f(x)-\nabla f_{\mathcal{G}}(x)\|\leq 2h \max_{i \in \N} \|\nabla f_i(x)\|, \quad{\text{and}}\quad \|\nabla^2 f(x)-\nabla^2 f_{\mathcal{G}}(x)\|\leq 2h \max_{i \in \N} \|\nabla^2 f_i(x)\|.
\label{eq:2}
\end{equation}
\end{lemma}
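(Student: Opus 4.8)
The plan is to prove both inequalities in~\eqref{eq:2} simultaneously, since they share an identical structure: the only ingredients are the triangle inequality and the fact that $\nabla f$ (resp.\ $\nabla^2 f$) is the \emph{uniform} average of the $\nabla f_i$ (resp.\ $\nabla^2 f_i$) over $\N$. Writing $g_i := \nabla f_i(x)$ and $M := \max_{i\in\N}\norm{g_i}$, and setting $d := |\N|$ and $m := |\mathcal{G}|$, I would carry out the argument for the gradient and then observe that it transfers verbatim to the Hessian case by replacing $g_i$ with $\nabla^2 f_i(x)$ and the Euclidean norm with the spectral norm, for which the triangle inequality continues to hold.

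First I would split the full index set as $\N = \mathcal{G}\cup(\N\setminus\mathcal{G})$, so that $\sum_{i\in\N} g_i = \sum_{i\in\mathcal{G}} g_i + \sum_{i\in\N\setminus\mathcal{G}} g_i$. Substituting this into $\nabla f(x)-\nabla f_{\mathcal{G}}(x) = \tfrac{1}{d}\sum_{i\in\N} g_i - \tfrac{1}{m}\sum_{i\in\mathcal{G}} g_i$ and regrouping gives
\[
\nabla f(x)-\nabla f_{\mathcal{G}}(x) = \left(\frac{1}{d}-\frac{1}{m}\right)\sum_{i\in\mathcal{G}} g_i + \frac{1}{d}\sum_{i\in\N\setminus\mathcal{G}} g_i .
\]
Applying the triangle inequality, bounding each summand by $M$, and using $m\le d$ (so that $\abs{\tfrac{1}{d}-\tfrac{1}{m}} = (d-m)/(dm)$), I would obtain
\[
\norm{\nabla f(x)-\nabla f_{\mathcal{G}}(x)} \le \frac{d-m}{dm}\,(mM) + \frac{1}{d}\,(d-m)M = 2\,\frac{d-m}{d}\,M .
\]

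Finally, the hypothesis $m = |\mathcal{G}| \ge \ceil{(1-h)d} \ge (1-h)d$ yields $(d-m)/d \le h$, whence the right-hand side is at most $2hM$, which is exactly the first inequality in~\eqref{eq:2}; the second follows identically. The only mildly delicate point is the bookkeeping behind the factor $2$: it arises because there are two distinct sources of error — the reweighting from $1/d$ to $1/m$ on the retained indices $\mathcal{G}$, and the omission of the dropped indices $\N\setminus\mathcal{G}$ — and each contributes exactly $(d-m)M/d$. I expect no deeper obstacle, since the estimate is purely combinatorial and uses no smoothness beyond the existence of the relevant gradients and Hessians.
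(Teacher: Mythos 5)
Your proof is correct and follows essentially the same route as the paper: the identical split of $\nabla f(x)-\nabla f_{\mathcal{G}}(x)$ into a reweighting term over $\mathcal{G}$ (with coefficient $\tfrac{1}{d}-\tfrac{1}{m}$, which is exactly the paper's $\tfrac{|\mathcal{G}|-|\N|}{|\N||\mathcal{G}|}$) plus the omitted-indices term over $\N\setminus\mathcal{G}$, then the triangle inequality, the bound of each block by $\max_{i\in\N}\|\nabla f_i(x)\|$, and finally $|\mathcal{G}|\geq(1-h)|\N|$ to obtain the factor $2h$. The observation that the Hessian case is verbatim the same under the spectral norm matches the paper's treatment as well.
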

\begin{proof}
It follows from \eqref{prob1} and \eqref{subsamGradi} that
\begin{align*}
  \nabla f(x)-\nabla f_{\mathcal{G}}(x)&= \frac{1}{|\N|}\sum_{i \in \N} \nabla f_i(x)-\frac{1}{|\mathcal{G}|}\sum_{i\in \mathcal{G}} \nabla f_i(x) \\ 
  &= \frac{1}{|\N||\mathcal{G}|}\left[|\mathcal{G}|\sum_{i\in \N} \nabla f_i(x)-|\N|\sum_{i\in \mathcal{G}} \nabla f_i(x)\right] \\
  &= \frac{1}{|\N||\mathcal{G}|}\left[(|\mathcal{G}|-|\N|)\sum_{i \in \mathcal{G}} \nabla f_i(x)+|\mathcal{G}|\sum_{i\in{\mathcal{N}\setminus\mathcal{G}}} \nabla f_i(x)\right], 
\end{align*}
which, combined with the fact that $|\N|\geq |\mathcal{G}|$, yields   
\begin{align*}
  \|\nabla f(x)-\nabla f_{\mathcal{G}}(x)\|&\leq  
   \frac{|\N|-|\mathcal{G}|}{|\N||\mathcal{G}|}\sum_{i \in \mathcal{G}} \|\nabla f_i(x)\|+\frac{1}{|\N|}\sum_{i\in {\mathcal{N}\setminus\mathcal{G}}}\| \nabla f_i(x)\|\\ 
   &{\leq \left(\frac{|\N|-|\mathcal{G}|}{|\N||\mathcal{G}|}\right)|\mathcal{G}|\max_{i\in\mathcal{G}}\|\nabla f_{i}(x)\|+\frac{|\mathcal{N}\setminus\mathcal{G}|}{|\mathcal{N}|}\max_{i\in\mathcal{N}\setminus\mathcal{G}}\|\nabla f(x)\|}\\
   &=   \frac{|\N|-|\mathcal{G}|}{|\N|} \max_{i \in \mathcal{G}} \|\nabla f_i(x)\|+\frac{|\N|-|\mathcal{G}|}{|\N|}\max_{i \in{\mathcal{N}\setminus\mathcal{G}}} \|\nabla f_i(x)\|\\
  & \leq 2  \left(\frac{|\N|-|\mathcal{G}|}{|\N|}\right) \max_{i \in \N} \|\nabla f_i(x)\|.
\end{align*}
Therefore, the first inequality in \eqref{eq:2} now follows from the fact that $|\mathcal{G}|\geq {(1-h)|\N|}$. The proof of the second inequality in \eqref{eq:2} follows a similar structure to that of the first and is therefore omitted for brevity.
\end{proof}

The following assumption is made throughout this work:
\begin{itemize}
\item[\textbf{(A1)}] {For every $\mathcal{G}\subset\mathcal{N}$, $f_{\mathcal{G}}$ is twice-continuously differentiable}, and the gradient  $\nabla f_{\G}$ is $L_g$-Lipschitz continuous, i.e.,
\begin{equation} \label{eq:o09}
\|\nabla f_{\G}(y)-\nabla f_{\G}(x)\|\leq L_g\|y-x\|,\quad \forall x,y\in\mathbb{R}^{n}.
\end{equation}
\item[\textbf{(A2)}]  {For every $i\in\N$, there exists $x^*_ i$ such that  $\nabla f_i(x^*_i)=0$. In addition, there exists $f_{low}\in\mathbb{R}$ such that}
\begin{equation}\label{inf:f}
   f(x) \geq f_{\text{low}}, \quad \forall x \in \mathbb{R}^{n}.
\end{equation}
\item [\textbf{(A3)}] {For every  $x_0 \in \R^n$, the sublevel set 
\begin{equation*}
    \mathcal{L}_{f}(x_{0}):=\left\{x\in\mathbb{R}^{n}\,:\,f(x)\leq f(x_{0})\right\}
\end{equation*}
is bounded.}

\end{itemize}

{
\begin{remark}
In view of {\bf (A3)}, given $x_{0}$ we have
\begin{equation}\label{def:D0}
\D_0:=\sup_{x \in \cL_f(x_0)}\max_{i\in \N}\|x-x^*_ i\| <\infty,
\end{equation}
where $x_{i}^{*}$ ($i=1,\ldots,d$) are the points specified in {\bf (A2)}.
\end{remark}
}

\begin{remark}  We note that, from {\bf (A1)}, it can be shown that, 
\begin{equation}
f(y)\leq f(x)+\langle\nabla f(x),y-x\rangle+\dfrac{L_g}{2}\|y-x\|^{2}, \quad \forall x,y\in\mathbb{R}^{n},
\label{eq:2.2}
\end{equation}
and 
 \begin{equation}\label{Li:Hessi}
   \|\nabla^2 f_{\mathcal{H}}(x)\|\leq L_g, \quad \forall x \in \R^n, \mathcal{H}\subset \N.    
 \end{equation}
\end{remark}

The next lemma, in particular, establishes  error bounds when the gradient and  Hessian of  the objective function $f$ are computed inexactly using subsampling techniques.

\begin{lemma} \label{lem:2}
Suppose that {\bf (A1)--(A3)} hold. Let $\varepsilon_g, \varepsilon_H>0$ and $x\in \cL_f(x_0)$. 
For a  given   $h\in [0,1]$, define   $\G, \mathcal{H} \subset \N$   such that  $|\mathcal{G}|\geq\ceil{(1-h)|\N|}$ and $|\mathcal{H}|\geq\ceil{(1-h)|\N|}$. Then,
  \begin{itemize}
 \item[(a)]
 $\|\nabla f(x)-\nabla f_{\mathcal{G}}(x)\|\leq 2hL_g \D_0;$ 
  \item[(b)] $-\lambda_{\min}(\nabla^2 f(x)) \leq -\lambda_{\min}(\nabla^2 f_{\mathcal{H}}(x)) +2hL_g$;
 \item[(c)] {If 
 \begin{equation*}
     \|\nabla f(x)\|>\varepsilon_{g}\quad\text{and}\quad h<\dfrac{\varepsilon_{g}}{10L_{g}D_{0}},
 \end{equation*}
 then $\|\nabla f_{\mathcal{G}}(x)\|>4\varepsilon_{g}/5$};
 
  \item[(d)] {If 
  \begin{equation*}
      -\lambda_{\min}\left(\nabla^{2}f(x)\right)>\epsilon_{H}\quad\text{and}\quad h<\dfrac{\epsilon_{H}}{10 L_{g}},
  \end{equation*}
  then $-\lambda_{\min}\left(\nabla^{2} f_{\mathcal{H}}(x)\right)>4\epsilon_{H}/5$}.
 \end{itemize}
\end{lemma}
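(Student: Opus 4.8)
The plan is to derive all four items from Lemma~\ref{lem:1}, using the structural assumptions to control the two maxima appearing on the right-hand sides of \eqref{eq:2}. The crucial observation is that {\bf (A1)} is stated for \emph{every} subset $\mathcal{G}\subset\mathcal{N}$, hence in particular for each singleton $\mathcal{G}=\{i\}$; since $f_{\{i\}}=f_i$, this means every individual gradient $\nabla f_i$ is $L_g$-Lipschitz and, via \eqref{Li:Hessi} applied with $\mathcal{H}=\{i\}$, every individual Hessian satisfies $\|\nabla^2 f_i(x)\|\leq L_g$. These two facts are exactly what is needed to turn the abstract maxima into the explicit constants $L_g\D_0$ and $L_g$.

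For part (a), I would start from the first inequality in \eqref{eq:2} and bound $\max_{i\in\mathcal{N}}\|\nabla f_i(x)\|$. Using the stationary points $x_i^*$ furnished by {\bf (A2)} together with the $L_g$-Lipschitzness of each $\nabla f_i$, I get $\|\nabla f_i(x)\|=\|\nabla f_i(x)-\nabla f_i(x_i^*)\|\leq L_g\|x-x_i^*\|$. Since $x\in\mathcal{L}_f(x_0)$, the definition \eqref{def:D0} of $\D_0$ yields $\|x-x_i^*\|\leq \D_0$ uniformly in $i$, so $\max_i\|\nabla f_i(x)\|\leq L_g\D_0$ and (a) follows at once. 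For part (b), I would instead use the second inequality in \eqref{eq:2} with $\|\nabla^2 f_i(x)\|\leq L_g$ to obtain $\|\nabla^2 f(x)-\nabla^2 f_{\mathcal{H}}(x)\|\leq 2hL_g$. The remaining task is to convert this norm bound on the difference of two symmetric matrices into a bound on their minimum eigenvalues. Writing $\nabla^2 f(x)=\nabla^2 f_{\mathcal{H}}(x)+E$ with $\|E\|\leq 2hL_g$, Weyl's inequality (superadditivity of $\lambda_{\min}$) gives $\lambda_{\min}(\nabla^2 f(x))\geq\lambda_{\min}(\nabla^2 f_{\mathcal{H}}(x))+\lambda_{\min}(E)\geq\lambda_{\min}(\nabla^2 f_{\mathcal{H}}(x))-2hL_g$, which rearranges into (b). This eigenvalue perturbation step is the only ingredient beyond Lemma~\ref{lem:1}, and I expect it to be the single (mild) technical point of the proof.

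Parts (c) and (d) should then be immediate consequences of (a) and (b) combined with the smallness hypotheses on $h$. For (c), the reverse triangle inequality and part (a) give $\|\nabla f_{\mathcal{G}}(x)\|\geq\|\nabla f(x)\|-2hL_g\D_0>\varepsilon_g-2hL_g\D_0$; substituting $h<\varepsilon_g/(10L_g\D_0)$ forces $2hL_g\D_0<\varepsilon_g/5$, leaving $\|\nabla f_{\mathcal{G}}(x)\|>4\varepsilon_g/5$. For (d), rearranging (b) gives $-\lambda_{\min}(\nabla^2 f_{\mathcal{H}}(x))\geq -\lambda_{\min}(\nabla^2 f(x))-2hL_g>\epsilon_H-2hL_g$, and $h<\epsilon_H/(10L_g)$ forces $2hL_g<\epsilon_H/5$, so $-\lambda_{\min}(\nabla^2 f_{\mathcal{H}}(x))>4\epsilon_H/5$. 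Beyond the bookkeeping with the factor $10$ (chosen so that the error eats exactly one-fifth of the tolerance), no genuine difficulty arises in these last two items.
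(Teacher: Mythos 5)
Your proposal is correct and follows essentially the same route as the paper's own proof: items (a) and (b) are obtained from Lemma~\ref{lem:1} by bounding $\max_{i\in\N}\|\nabla f_i(x)\|\leq L_g\D_0$ via {\bf (A2)} and the Lipschitz property, and $\max_{i\in\N}\|\nabla^2 f_i(x)\|\leq L_g$ via \eqref{Li:Hessi}, with Weyl's inequality handling the eigenvalue perturbation in (b); items (c) and (d) then follow by the same triangle-inequality bookkeeping with $2hL_g\D_0<\varepsilon_g/5$ and $2hL_g<\varepsilon_H/5$. The only cosmetic difference is that the paper first derives the matrix ordering $\nabla^2 f_{\mathcal{H}}(x)\preceq\nabla^2 f(x)+2hL_gI$ before invoking Weyl, whereas you apply Weyl's superadditivity of $\lambda_{\min}$ directly to the perturbation $E$; these are the same argument.
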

\begin{proof}
(a) It follows from the first inequality in \eqref{eq:2}, the fact that $\nabla f_i(x^*_i)=0$ for every $i \in \N$,  and inequalities in \eqref {eq:o09}  and \eqref{def:D0} that 
 \[
 \|\nabla f(x)-\nabla f_{\mathcal{G}}(x)\|\leq 2h \max_{i \in \N} \|\nabla f_i(x)-\nabla f_i(x^*_i)\|\leq 2h L_g \max_{i \in \N}  \|x-x^*_i\|\leq 2hL_g\D_0,
 \]
 which proves the desired inequality.
\\[2mm]
(b) From the second inequality in \eqref{eq:2} with $\mathcal{G}=\mathcal{H}$ and \eqref{Li:Hessi}, we  find that
 \begin{equation}\label{er:908}
  \|\nabla^2 f(x)-\nabla^2 f_{\mathcal{H}}(x)\|\leq 2h \max_{i \in \N} \|\nabla^2 f_i(x)\|\leq 2h L_g.    
 \end{equation}
Hence, for any $d\in\mathbb{R}^{n}$, it follows that
\begin{align*}
{\langle\left(\nabla^2 f_{\mathcal{H}}(x)-\nabla^{2}f(x)\right)d,d\rangle}
\leq \|\nabla^2 f(x)-\nabla^{2}f_{\mathcal{H}}(x)\|\|d\|^{2}
 \leq  2hL_g\langle I\,d,d\rangle.
\end{align*}
Since the inequality above holds for all $d\in\mathbb{R}^{n}$, we have
\begin{equation*}
\nabla^{2}f_{\mathcal{H}}(x)\preceq\nabla^2 f (x)+ 2hL_gI,
\end{equation*}
which, using the Weyl's inequality \cite{doi:10.1137/1.9781611971446}, yields
\[
-\lambda_{\min}\left(\nabla^2 f(x)\right)\leq-\lambda_{\min}\left(\nabla^{2}f_{\mathcal{H}}(x)\right)+2hL_g,
\] 
concluding the proof of the item.
\\[2mm]
 (c) By combining $\|\nabla f(x)\|>\varepsilon_g$,  the inequality of   item~(a) and  $h<\varepsilon_g/(10L_g\D_0)$, we have  
 \[
 \varepsilon_g<\|\nabla f(x)\|\leq \|\nabla f(x)-\nabla f_{\mathcal{G}}(x)\|+\|\nabla f_{\G}(x)\|\leq 2hL_g\D_0+\|\nabla f_{\G}(x)\|\leq \frac{\varepsilon_g}{5} +\|\nabla f_{\G}(x)\|,
 \]
 which implies the inequality of item~(c).
 \\[2mm]
 (d) It follows from   $-\lambda_{\min}(\nabla^2 f(x_k))>\varepsilon_H$, the inequality of   item~(b) and   $h<\varepsilon_H/(10L_g)$ that   
 \[
 \varepsilon_H<-\lambda_{\min}(\nabla^2 f(x)) \leq -\lambda_{\min}(\nabla^2 f_{\mathcal{H}}(x)) +2hL_g\leq -\lambda_{\min}(\nabla^2 f_{\mathcal{H}}(x))+\frac{\varepsilon_H}{5},
 \]
 which implies the desired inequality of  the item.
\end{proof}

\section{Method for Computing Approximate First-Order Critical Points}\label{sec:3}

In this section, we present and analyze a sub-sampled trust-region method for finding approximate first-order stationary points of problem~\eqref{prob1}. The method employs an adaptive subsampling strategy to approximate the gradient of \( f \), while the Hessian is approximated by a symmetric matrix, which may or may not be computed via subsampling. Specifically, our novel adaptive sampling procedure selects a subset \( \mathcal{G}_k \subset \mathcal{N} \) such that
\begin{equation}
\|\nabla f(x_k) - \nabla f_{\mathcal{G}_k}(x_k)\| \leq \mathcal{O}(\Delta_k) \quad \text{and} \quad \|\nabla f_{\mathcal{G}_k}(x_k)\| > \frac{4\varepsilon_g}{5}, 
\label{eq:keys}
\end{equation}
where \( \varepsilon_g > 0 \) is a user-defined tolerance for the norm of the gradient of \( f \). Thanks to the inequalities in~\eqref{eq:keys}, we show that Algorithm~1 requires at most \( \mathcal{O}(\varepsilon_g^{-2}) \) iterations to compute an \( \varepsilon_g \)-approximate stationary point of the objective function.
\newpage
\begin{mdframed}
\noindent\textbf{Algorithm 1:} {First-Order sub-sampled-TR Method}
\\[0.2cm]
\noindent\textbf{Step 0.} {Given} $x_0 \in \R^n$, $\varepsilon_g > 0$, $\gamma>1,$ $\alpha \in (0, 1)$ and  $\Delta_{\max}\geq\Delta_{0}>0$, set $k:=0$.
\\[0.2cm]
\noindent\textbf{Step 1.} Let $j:=0$.
\\[0.2cm]
\noindent\textbf{Step 1.1.} Define 
\begin{equation} \label{def:h_k}
   {h^j_k:=\frac{\Delta_k}{\gamma^j\Delta_{\max}}}, 
\end{equation}
and choose {$\G_k^j\subset \N$ such that $|\mathcal{G}_k^j|\geq \ceil{(1-h^j_k)|\N|}$}. 
\\[0.2cm]
\noindent\textbf{Step 1.2.} Compute $\nabla f_{\mathcal{G}_k^j}(x_k)$. If 
\begin{equation}\label{graSubcond}
\|\nabla f_{\mathcal{G}_k^j}(x_k)\|> \frac{4\varepsilon_g}5,   
\end{equation}
then {define} $j_k=j$ and $\G_k:=\mathcal{G}_k^{j_k}$, and go to Step~2. Otherwise, set $j:=j+1$ and go  to Step 1.1. 
\\[0.2cm]
\noindent\textbf{Step 2.} {Compute $B_{k}\in\mathbb{R}^{n\times n}$ symmetric.}
\\[0.2cm]
\noindent\textbf{Step 3} {Compute an approximate solution $d_{k}$ of the trust-region subproblem}  
\begin{equation}
\begin{aligned} 
\min_{\|d\| \leq \Delta_{k} } \quad & m_k(d):= f(x_k)+\langle \nabla f_{\mathcal{G}_k}(x_k),d\rangle+\dfrac{1}{2}\langle B_{k}d,d\rangle,\\
\end{aligned}
\label{eq:subproblem}
\end{equation}
 such that
\begin{equation}\label{sdcdf}
   m_k(0)-m_k(d_k) \geq \frac12\|\nabla f_{\mathcal{G}_k}(x_k)\|\min\left\{\Delta_k,\frac{\|\nabla f_{\mathcal{G}_k}(x_k)\|}{{\|B_{k}\|}}\right\}.
    \end{equation}
\noindent\textbf{Step 4.} Compute \begin{equation}\label{ratio_generalset}
    \rho_{k}=\frac{f(x_k)-f(x_k+d_{k})}{m(0)-m(d_{k})}.
\end{equation} 
If $\rho_{k}\geq\alpha$, define $x_{k+1}:=x_{k}+d_{k}$ and $\Delta_{k+1}:=\min\left\{2\Delta_{k},\Delta_{\max}\right\}$. Otherwise, set $x_{k+1}:=x_{k}$ and  $\Delta_{k+1}:=\frac{1}{2}\Delta_{k}$.
\\[0.2cm]
\noindent\textbf{Step 5} Set  $k:=k+1$ and return to Step~1.
\end{mdframed}

\vspace{0.3cm}
\begin{remark}
(i) 
As will be shown, the well-definedness of the inner loop in Step~1 primarily follows from Lemma~\ref{lem:2}(c).  
(ii) Since {\bf (A1)} already ensures the boundedness of the Hessian approximations (see \eqref{Li:Hessi}), no specific conditions are imposed on the Hessian sample size.
(iii) The trust-region subproblem in \eqref{eq:subproblem} is solved inexactly, ensuring that condition \eqref{sdcdf} is satisfied. Specifically, the step \( d_k \) achieves at least the reduction in the model provided by the Cauchy step \( d^C_k \), which is defined as  
\begin{equation*}
   d^C_k := \argmin \{ m_k(d) : d = -t \nabla f_{\mathcal{G}_k}(x_k),\,\,t > 0, \, \|d\| \leq \Delta_k \}.
\end{equation*}
(iv) As is customary in trust-region methods, the acceptance rule for \( d_k \) in Step~4 is based on the agreement between the function \( f \) at {\( x_k+d_{k} \)} and the model \( m_k \) evaluated at \( d_k \). 
\end{remark}

We now turn our attention to the iteration-complexity analysis of Algorithm~1. In this context, we classify the 
{iterations} of the algorithm into two distinct types:  
\vspace{0.2cm}
\begin{enumerate}
\item \textbf{Successful iterations} ($\mathcal{S}$): These occur when $\rho_{k} \geq \alpha$, resulting in an update $x_{k+1} = x_{k} + d_{k}$ and {a potential increase in the} trust region radius, $\Delta_{k+1} = \min\{2\Delta_{k}, \Delta_{\max}\}$.

\item \textbf{Unsuccessful iterations} ($\mathcal{U}$): These occur when $\rho_{k} < \alpha$, where the point remains unchanged, $x_{k+1} = x_{k}$, and the trust region radius is reduced, $\Delta_{k+1} = \frac{1}{2} \Delta_{k}$.
\end{enumerate}
The following index sets are required: for a given \( k \in \{0, 1, 2, \dots \} \), define  
\begin{equation}\label{indexSU}
  \mathcal{S}_k = \{0, 1, \dots, k\} \cap \mathcal{S}, \quad \mathcal{U}_k = \{0, 1, \dots, k\} \cap \mathcal{U}.  
\end{equation}
Additionally, define
\begin{equation}
T(\varepsilon_g) = \inf \left\{ k \in \mathbb{N} : \|\nabla f(x_k)\| \leq \varepsilon_g \right\}
   \label{eq:hitting}
\end{equation}
as the index of the first iteration {for} which 
{$x_{k}$ is an} \(\varepsilon_g\)-approximate stationary point. Our objective is to derive a finite upper bound for \( T(\varepsilon_g) \). By {assuming that \( T(\varepsilon_g) \geq 1 \)}, we have
\begin{equation}\label{eq:motivation}
   T(\varepsilon_g) = |\mathcal{S}_{T(\varepsilon_g)-1} \cup \mathcal{U}_{T(\varepsilon_g)-1}| \leq |\mathcal{S}_{T(\varepsilon_g)-1}| + |\mathcal{U}_{T(\varepsilon_g)-1}|. 
\end{equation}
{Thus, our analysis will focus on bounding from above $|\mathcal{S}_{T(\varepsilon_g)-1}|$ and $|\mathcal{U}_{T(\varepsilon_g)-1}|$}.

The next lemma shows that the sequence generated by the algorithm is well-defined and is contained in the level set $\cL_f(x_0)$. Moreover, it is proven that the  inner procedure in {Step~1} stops in a finite number of trials.  


\begin{lemma}\label{lem:3}
Suppose that {\bf (A1)--(A3)} hold and \( T(\varepsilon_g) \geq 1 \). 
Then, the sequence $\{x_k\}_{k=0}^{T(\varepsilon_g)}$ is well-defined and  is contained in $\cL_f(x_0)$. Moreover, 
the inner sequence  $\{j_k\}_{k=0}^{T(\varepsilon_g)-1}$ satisfies
\begin{equation}
 0\leq j_{k}\leq 1+\max\left\{\log_{\gamma}\left(10L_{g}D_{0}\varepsilon_{g}^{-1}\right),0\right\}:=j_{\max}.
\label{eq:3.7}
\end{equation}
\end{lemma}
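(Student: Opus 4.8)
The plan is to proceed by induction on $k$, establishing simultaneously that iteration $k$ is well-defined, that the inner loop of Step~1 terminates with $j_k$ obeying \eqref{eq:3.7}, and that $x_{k+1}\in\cL_f(x_0)$. The three properties are entangled, and carrying them in the right order is the crux of the argument: termination of the inner loop at iteration $k$ invokes Lemma~\ref{lem:2}(c), whose hypotheses require $x_k\in\cL_f(x_0)$ so that the sampling error is controlled by the finite constant $D_0$ of \eqref{def:D0}; conversely, the membership $x_{k+1}\in\cL_f(x_0)$ can only be argued once the step $d_k$ has been produced, that is, after the inner loop at iteration $k$ has already terminated.

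As a preliminary I would record that $\Delta_k\le\Delta_{\max}$ for all $k$, which follows by a one-line induction from $\Delta_0\le\Delta_{\max}$ and the two branches of the update in Step~4 (each leaves the radius at most $\Delta_{\max}$). Hence $h_k^0=\Delta_k/\Delta_{\max}\le 1$, every $h_k^j$ defined in \eqref{def:h_k} lies in $[0,1]$, and the sampling requirement $|\mathcal{G}_k^j|\ge\ceil{(1-h_k^j)|\N|}$ is legitimate.

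For the inductive step, fix $k<T(\varepsilon_g)$ and assume $x_0,\dots,x_k\in\cL_f(x_0)$ with iterations $0,\dots,k-1$ well-defined. Since $k<T(\varepsilon_g)$, definition \eqref{eq:hitting} gives $\|\nabla f(x_k)\|>\varepsilon_g$. Because $\gamma>1$, the values $h_k^j=\Delta_k/(\gamma^j\Delta_{\max})$ decrease to $0$, so there is a smallest integer $j^*$ with $h_k^{j^*}<\varepsilon_g/(10L_gD_0)$; as $x_k\in\cL_f(x_0)$, Lemma~\ref{lem:2}(c) applies at that index and yields $\|\nabla f_{\mathcal{G}_k^{j^*}}(x_k)\|>4\varepsilon_g/5$, so the test \eqref{graSubcond} is met and $j_k\le j^*<\infty$. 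To quantify $j^*$ I would solve $\gamma^{j^*}>10L_gD_0\Delta_k/(\varepsilon_g\Delta_{\max})$ and use $\Delta_k/\Delta_{\max}\le1$ to obtain $j^*\le 1+\log_\gamma(10L_gD_0\varepsilon_g^{-1})$; when this logarithm is negative one has $\varepsilon_g/(10L_gD_0)>1\ge h_k^0$, forcing $j_k=0$, and the $\max\{\,\cdot\,,0\}$ in \eqref{eq:3.7} absorbs both cases.

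Finally, with $\mathcal{G}_k$ fixed, Step~2 selects any symmetric $B_k$ and Step~3 admits the Cauchy step, which meets the decrease condition \eqref{sdcdf}; in particular the right-hand side of \eqref{sdcdf} is nonnegative, so $m_k(0)-m_k(d_k)\ge0$ and $x_{k+1}$ is well-defined. If iteration $k$ is unsuccessful, then $x_{k+1}=x_k\in\cL_f(x_0)$; if it is successful, then $\rho_k\ge\alpha>0$ together with \eqref{ratio_generalset} and \eqref{sdcdf} gives $f(x_{k+1})\le f(x_k)\le f(x_0)$, whence $x_{k+1}\in\cL_f(x_0)$. This closes the induction up to the production of $x_{T(\varepsilon_g)}$. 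I expect the only genuine obstacle to be bookkeeping the inductive hypotheses in the correct sequence, since the level-set membership of $x_k$ is precisely the ingredient that licenses the use of Lemma~\ref{lem:2}(c) and thereby guarantees finite termination of the inner loop.
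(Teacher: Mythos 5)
Your proof is correct and takes essentially the same route as the paper's: an induction on $k$ in which the level-set membership $x_k\in\cL_f(x_0)$ is exactly what licenses Lemma~\ref{lem:2}(c), which together with $\Delta_k\le\Delta_{\max}$ and $\gamma>1$ bounds the inner-loop counter, after which Step~4 yields well-definedness and $x_{k+1}\in\cL_f(x_0)$. The only differences are cosmetic: you exhibit the terminating index $j^*$ directly and conclude $j_k\le j^*$, whereas the paper argues by contradiction with the minimality of $j_k$, and you spell out details the paper leaves implicit (the induction $\Delta_k\le\Delta_{\max}$ and the descent argument $f(x_{k+1})\le f(x_k)$ on successful iterations).
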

\begin{proof} 
We proceed by induction. Clearly, \( x_0 \in \cL_f(x_0) \). We now verify that \eqref{eq:3.7} is satisfied for \( k=0 \).
{Suppose by contradiction that
\[
j_0 > 1 + \max \left\{ \log_{\gamma} \left(10L_{g}D_{0}\varepsilon_{g}^{-1}\right), 0 \right\}.
\]
Then we would have $j_{0}-1>\log_{\gamma}\left(10L_{g}D_{0}\varepsilon_{g}^{-1}\right)$, which by (\ref{def:h_k}) and $\Delta_{0}\leq \Delta_{\max}$, would imply 
\begin{equation*}
    h_{0}^{j_{0}-1}=\frac{\Delta_{0}}{\gamma^{j_{0}-1}\Delta_{\max}}\leq\dfrac{1}{\gamma^{j_{0}-1}}<\frac{\varepsilon_{g}}{10L_{g}D_{0}}.
\end{equation*}}
By Lemma~\ref{lem:2}(c), with \( x := x_0 \), {\( h = h^{j_0 - 1}_0 \) and \( \mathcal{G} := \mathcal{G}^{j_0 - 1}_0 \)}, it follows that the inequality in \eqref{graSubcond} would hold for \( j = j_0 - 1 \), contradicting the minimality of \( j_0 \). Therefore, \eqref{eq:3.7} holds for \( k = 0 \).
Since the inner procedure terminates after a finite number of steps, we conclude that \( x_1 \) is well-defined and belongs to \( \cL_f(x_0) \) (see Step~4 of Algorithm~1). Now, assuming \( x_k \in \cL_f(x_0) \) holds, a similar argument shows that \( j_k \) satisfies \eqref{eq:3.7}, which implies that \( x_{k+1} \) is well-defined and also belongs to \( \cL_f(x_0) \). 
\end{proof}

\noindent In view of (\ref{Li:Hessi}), let us now consider the following assumption on the sequence of matrices $\left\{B_{k}\right\}_{k\geq 0}$:
\begin{itemize}
    \item [\textbf{(A4)}] For all $k\geq 0$, $\|B_{k}\|\leq L_{g}$.
\end{itemize}

Next, we derive a sufficient condition to ensure that an iteration is successful.

\begin{lemma}\label{lem:4}
Suppose that {\bf (A1)--(A4)} hold and \( T(\varepsilon_g) \geq 1 \).  {Given $k\leq T(\varepsilon_g)-1$, if  
\begin{equation}\label{Es:delta}
    \Delta_k \leq \frac{(1-\alpha)\|\nabla f_{\mathcal{G}_k}(x_k)\|}{2\left[1+2\left(\frac{D_{0}}{\Delta_{\max}}\right)\right]L_{g}},
\end{equation}
}
 then $\rho_k\geq \alpha$ (that is, $k \in \mathcal{S}_{T(\varepsilon_g)-1}$).
\end{lemma}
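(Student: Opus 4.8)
The plan is to follow the classical trust-region argument that controls the gap between the predicted decrease and the actual decrease, and to show that the radius condition \eqref{Es:delta} forces this gap to be negligible compared with the model reduction. The starting point is the observation that $m_k(0)=f(x_k)$ by the definition of the model in \eqref{eq:subproblem}, so that the ratio in \eqref{ratio_generalset} can be rewritten as
\[
1-\rho_k=\frac{f(x_k+d_k)-m_k(d_k)}{m_k(0)-m_k(d_k)},
\]
and it suffices to show that the right-hand side is at most $1-\alpha$.

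To bound the numerator from above, I would combine the descent inequality \eqref{eq:2.2}, the definition of $m_k(d_k)$, and Cauchy--Schwarz to obtain
\[
f(x_k+d_k)-m_k(d_k)\leq \|\nabla f(x_k)-\nabla f_{\mathcal{G}_k}(x_k)\|\,\|d_k\|+\tfrac{L_g}{2}\|d_k\|^2+\tfrac12\|B_k\|\,\|d_k\|^2.
\]
The crucial point is that the sampling parameter realized at the accepted inner index satisfies $h^{j_k}_k=\Delta_k/(\gamma^{j_k}\Delta_{\max})\leq \Delta_k/\Delta_{\max}$ because $\gamma^{j_k}\geq 1$; hence Lemma~\ref{lem:2}(a) delivers the $\mathcal{O}(\Delta_k)$ gradient-error bound $\|\nabla f(x_k)-\nabla f_{\mathcal{G}_k}(x_k)\|\leq 2(\Delta_k/\Delta_{\max})L_g D_0$ announced in \eqref{eq:keys}. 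Using this together with the trust-region constraint $\|d_k\|\leq\Delta_k$ and assumption \textbf{(A4)}, the three terms collapse to $L_g\Delta_k^2\bigl[1+2(D_0/\Delta_{\max})\bigr]$.

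To bound the denominator from below, I would use the Cauchy-type decrease \eqref{sdcdf}. The hypothesis \eqref{Es:delta}, together with $\alpha\in(0,1)$ and $1+2(D_0/\Delta_{\max})\geq 1$, gives $\Delta_k\leq \|\nabla f_{\mathcal{G}_k}(x_k)\|/(2L_g)\leq \|\nabla f_{\mathcal{G}_k}(x_k)\|/\|B_k\|$ (again via \textbf{(A4)}), so that the minimum in \eqref{sdcdf} is attained at $\Delta_k$ and $m_k(0)-m_k(d_k)\geq\tfrac12\|\nabla f_{\mathcal{G}_k}(x_k)\|\Delta_k$. Dividing the two estimates cancels one factor of $\Delta_k$ and yields
\[
1-\rho_k\leq \frac{2L_g\,\Delta_k\bigl[1+2(D_0/\Delta_{\max})\bigr]}{\|\nabla f_{\mathcal{G}_k}(x_k)\|};
\]
substituting \eqref{Es:delta} makes the right-hand side exactly $1-\alpha$, whence $\rho_k\geq\alpha$ and $k\in\mathcal{S}_{T(\varepsilon_g)-1}$.

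The step I expect to require the most care is converting the gradient error into a quantity proportional to $\Delta_k$: one must keep track of the fact that the accepted subsample $\mathcal{G}_k=\mathcal{G}_k^{j_k}$ is built with $h^{j_k}_k\leq \Delta_k/\Delta_{\max}$, so that Lemma~\ref{lem:2}(a) produces a bound scaling with $\Delta_k$ rather than a fixed tolerance. The remaining estimates are routine, and the specific constant $1+2(D_0/\Delta_{\max})$ appearing in \eqref{Es:delta} is precisely what makes the final cancellation exact.
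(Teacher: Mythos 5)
Your proposal is correct and follows essentially the same argument as the paper's proof: the same decomposition of $1-\rho_k$ isolating the inexact-gradient term, the same use of the descent inequality \eqref{eq:2.2}, Cauchy--Schwarz, \textbf{(A4)}, and Lemma~\ref{lem:2}(a) with $h_k^{j_k}\leq\Delta_k/\Delta_{\max}$ to get the numerator bound $\bigl[1+2(D_0/\Delta_{\max})\bigr]L_g\Delta_k^2$, and the same resolution of the minimum in \eqref{sdcdf} to $\Delta_k$ for the denominator. The only cosmetic difference is that you compare $\Delta_k$ directly with $\|\nabla f_{\mathcal{G}_k}(x_k)\|/\|B_k\|$ while the paper routes the comparison through $\|\nabla f_{\mathcal{G}_k}(x_k)\|/L_g$; both rest on \textbf{(A4)} and yield the identical final estimate.
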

\begin{proof} It follows from \eqref{ratio_generalset} and  \eqref{eq:subproblem} that 
    \begin{eqnarray*}
1-\rho_{k}&=&\frac{m(0)-m(d_{k})-f(x_k)+
f(x_k+d_{k})}{m(0)-m(d_{k})}\\
&=& \frac{
f(x_k+d_{k})-f(x_k)-\langle \nabla f_{\mathcal{G}_k}(x_k),d_k\rangle-\dfrac{1}{2}\langle B_{k}d_k,d_k\rangle}{m(0)-m(d_{k})}\\
&=& \frac{f(x_k+d_{k})-f(x_k){-\langle \nabla f(x_k),d_k\rangle}-\langle \nabla f_{\mathcal{G}_k}(x_k)-\nabla f(x_k),d_k\rangle-\dfrac{1}{2}\langle B_{k}d_k,d_k\rangle}{m(0)-m(d_{k})}
\end{eqnarray*}
From the last inequality, \eqref{eq:2.2}, the Cauchy-Schwartz inequality, Step~1 of the Algorithm~1 and  Lemma~\ref{lem:2}(a), we find
\begin{align*}
 1-\rho_{k}&\leq   \frac{ {L_g}\|d_k\|^2+ \|\nabla f_{\mathcal{G}_k}(x_k)-\nabla f(x_k)\|\|d_k\| }{m(0)-m(d_{k})}\\
 &\leq   \frac{ {L_g}\|d_k\|^2+ 2h_k^{j_k}L_g\D_0\|d_k\| }{m(0)-m(d_{k})},
\end{align*}
which, combined with the definition of $h_k^{j_k}$ in \eqref{def:h_k}, $\gamma>1$ and $\|d_k\|\leq \Delta_k$, yields{
\begin{align*}
 1-\rho_{k}
 &\leq   \frac{ \left[1+2\left(\frac{D_{0}}{\Delta_{\max}}\right)\right]{L_g}\Delta_k^2}{m(0)-m(d_{k})}.
\end{align*}
}
On the other hand, since {
\[
 \Delta_k \leq \frac{(1-\alpha)\|\nabla f_{\mathcal{G}_k}(x_k)\|}{2\left[1+2\left(\frac{D_{0}}{\Delta_{\max}}\right)\right]L_{g}}\leq \frac{\|\nabla f_{\mathcal{G}_k}(x_k)\|}{L_g},
\]
}
it follows from \eqref{Li:Hessi} and  \eqref{sdcdf}  that
\begin{align*}
 \frac{ 1}{m(0)-m(d_{k})}
 &\leq \frac{2}{\|\nabla f_{\mathcal{G}_k}(x_k)\|\min\left\{\Delta_k,\frac{\|\nabla f_{\mathcal{G}_k}(x_k)\|}{L_g}\right\} }= \frac{2}{\Delta_k\|\nabla f_{\mathcal{G}_k}(x_k)\|}.
\end{align*}
By combining the last inequalities, we obtain 
\begin{align*}
 1-\rho_{k}
 &\leq   \frac{ 2\left[1+2\left(\frac{D_{0}}{\Delta_{\max}}\right)\right]L_{g}}{\|\nabla f_{\mathcal{G}_k}(x_k)\|}\Delta_k.
\end{align*}
Therefore, the desired inequality follows now from \eqref{Es:delta}.
\end{proof}
The following lemma establishes a lower bound for the trust-region radius.

\begin{lemma}\label{lem:5}
Suppose that {\bf (A1)--(A4)} hold and \( T(\varepsilon_g) \geq 1 \).  Then,
\begin{equation}\label{Es:341}
 \Delta_k \geq \Delta_{\min}(\varepsilon_g), \quad \forall k\leq T(\varepsilon_g)-1,
\end{equation}
where
\begin{equation}\label{Es:3412}
 \Delta_{\min}(\varepsilon_g):=\min\left\{\Delta_0,\frac{(1-\alpha)\varepsilon_g}{5\left[1+2\left(\frac{D_{0}}{\Delta_{\max}}\right)\right]L_{g}}\right\}. 
\end{equation}
\end{lemma}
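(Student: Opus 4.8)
The plan is to run the classical trust-region argument showing that the radius cannot fall below the threshold at which Lemma~\ref{lem:4} already forces a successful step. The key preliminary observation is that, for every $k\le T(\varepsilon_g)-1$, the inner loop of Step~1 has terminated (by Lemma~\ref{lem:3}), so the exit test \eqref{graSubcond} yields $\|\nabla f_{\mathcal{G}_k}(x_k)\|>4\varepsilon_g/5$. Feeding this lower bound into the threshold appearing in Lemma~\ref{lem:4} and setting
\[
\bar\Delta:=\frac{2(1-\alpha)\varepsilon_g}{5\left[1+2\left(\frac{D_{0}}{\Delta_{\max}}\right)\right]L_g},
\]
one sees that $\Delta_k\le\bar\Delta$ implies $\Delta_k\le (1-\alpha)\|\nabla f_{\mathcal{G}_k}(x_k)\|/\bigl(2[1+2(D_0/\Delta_{\max})]L_g\bigr)$, whence iteration $k$ is successful by Lemma~\ref{lem:4}. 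Note that $\bar\Delta/2$ is exactly the second term inside the minimum in \eqref{Es:3412}, so $\Delta_{\min}(\varepsilon_g)=\min\{\Delta_0,\bar\Delta/2\}$.

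First I would argue by contradiction. Suppose \eqref{Es:341} fails and let $k\le T(\varepsilon_g)-1$ be the smallest index with $\Delta_k<\Delta_{\min}(\varepsilon_g)$. Since $\Delta_{\min}(\varepsilon_g)\le\Delta_0$, we have $k\ge1$, and minimality gives $\Delta_{k-1}\ge\Delta_{\min}(\varepsilon_g)$. By Step~4, the radius strictly decreases only on unsuccessful iterations, where it is exactly halved; since $\Delta_k<\Delta_{k-1}$, iteration $k-1$ must be unsuccessful and therefore $\Delta_{k-1}=2\Delta_k<2\Delta_{\min}(\varepsilon_g)\le\bar\Delta$. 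I would then invoke the preliminary observation at index $k-1$: the bound $\Delta_{k-1}<\bar\Delta$ forces iteration $k-1$ to be successful, contradicting its being unsuccessful, and this contradiction proves $\Delta_k\ge\Delta_{\min}(\varepsilon_g)$ for all $k\le T(\varepsilon_g)-1$.

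The argument is essentially routine; there is no genuine obstacle, only two points that need care. The first is verifying $2\Delta_{\min}(\varepsilon_g)\le\bar\Delta$ in both branches of the minimum in \eqref{Es:3412}: this is immediate since $2(\bar\Delta/2)=\bar\Delta$, while $\Delta_{\min}(\varepsilon_g)=\Delta_0$ forces $\Delta_0\le\bar\Delta/2$, i.e.\ $2\Delta_0\le\bar\Delta$. The second is confirming that the gradient lower bound \eqref{graSubcond} is available at every relevant index $k\le T(\varepsilon_g)-1$, which is precisely what the well-definedness and finite termination established in Lemma~\ref{lem:3} provide. An equivalent phrasing, if preferred, is a one-step induction deriving $\Delta_{k+1}\ge\Delta_{\min}(\varepsilon_g)$ from $\Delta_k\ge\Delta_{\min}(\varepsilon_g)$: on a successful iteration $\Delta_{k+1}=\min\{2\Delta_k,\Delta_{\max}\}\ge\Delta_{\min}(\varepsilon_g)$, while on an unsuccessful iteration the contrapositive of Lemma~\ref{lem:4} gives $\Delta_k>\bar\Delta$, so $\Delta_{k+1}=\Delta_k/2>\bar\Delta/2\ge\Delta_{\min}(\varepsilon_g)$.
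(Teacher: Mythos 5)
Your proposal is correct and rests on exactly the same ingredients as the paper's proof: the lower bound $\|\nabla f_{\mathcal{G}_k}(x_k)\|>4\varepsilon_g/5$ from the inner-loop exit test, fed into Lemma~\ref{lem:4} via the threshold $\bar\Delta=\frac{2(1-\alpha)\varepsilon_g}{5\left[1+2\left(\frac{D_{0}}{\Delta_{\max}}\right)\right]L_{g}}$, together with the halving rule of Step~4. The paper argues by induction with a case split on $\Delta_k\lessgtr\bar\Delta$, which is just a rephrasing of your minimal-counterexample contradiction (and is literally the ``equivalent phrasing'' you give at the end), so the two proofs are essentially identical.
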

\begin{proof}
   Clearly \eqref{Es:341}) is true for  $k=0$.  Suppose that \eqref{Es:341}) holds for some $k\geq 0$, and let us prove that the inequality also holds for $k+1$. We consider two case:
   \\[2mm]
   \textbf{Case I:} $\Delta_{k}\leq\dfrac{2(1-\alpha)\varepsilon_{g}}{5\left[1+2\left(\frac{D_{0}}{\Delta_{\max}}\right)\right]L_{g}}$.
   \\[0.2cm]
    Since $\|\nabla f_{\mathcal{G}_k}(x_k)\|> {4\varepsilon_g}/5$, in this case, we have  
   \[
   \Delta_{k}\leq \frac{(1-\alpha)\|\nabla f_{\mathcal{G}_{k}}(x_{k})\|}{2\left[1+2\left(\frac{D_{0}}{\Delta_{\max}}\right)\right]L_{g}}.
   \]
   Therefore, it follows from Lemma~\ref{lem:4} that  $\rho_k\geq \alpha$, which in turn implies that 
   \[
   \Delta_{k+1}=\min\left\{2\Delta_{k},\Delta_{\max}\right\}\geq \Delta_k \geq  \Delta_{\min}(\varepsilon_g),
   \]
where the last inequality is due to the induction hypothesis. Thus,   \eqref{Es:341}) is true  for $k+1$.
\\[2mm]
\textbf{Case II} $\Delta_{k}>\dfrac{2(1-\alpha)\varepsilon_{g}}{5\left[1+2\left(\frac{D_{0}}{\Delta_{\max}}\right)\right]L_{g}}$. 
\\[0.2cm]
Since the trust region radius in Algorithm~1 satisfies  $\Delta_{k+1}\geq \frac12\Delta_{k}$, it follows that
   \[
   \Delta_{k+1}\geq\frac{1}{2}\Delta_{k} >  \dfrac{(1-\alpha)\varepsilon_{g}}{5\left[1+2\left(\frac{D_{0}}{\Delta_{\max}}\right)\right]L_{g}} \geq  \Delta_{\min}(\varepsilon_g) 
   \]
proving  \eqref{Es:341}) for $k+1$. 
\end{proof}
 Let us now consider the following additional assumption:
\begin{itemize}
\item [\textbf{(A5)}] The inital trust-region radius $\Delta_{0}>0$ is chosen such that 
\begin{equation}\label{Delta0}
 \Delta_{0}\geq\dfrac{(1-\alpha)\epsilon_{g}}{5\left[1+2\left(\frac{D_{0}}{\Delta_{\max}}\right)\right]L_{g}}.
\end{equation}
\end{itemize}

In the following two lemmas, we will derive upper bounds for
$|\mathcal{S}_{T(\varepsilon_g)-1}|$ and $  |\mathcal{U}_{T(\varepsilon_g)-1}|$.

\begin{lemma}
\label{lem:3.454}
Suppose that {\bf (A1)--(A5)} hold and \( T(\varepsilon_g) \geq 1 \). Then
\begin{equation}
|\mathcal{S}_{T(\varepsilon_g)-1}|\leq \dfrac{25\left[1+2\left(\frac{D_{0}}{\Delta_{\max}}\right)\right]L_{g}(f(x_{0})-f_{low})}{2\alpha(1-\alpha)}\epsilon_{g}^{-2}.
\label{eq:3.11}
\end{equation}
\end{lemma}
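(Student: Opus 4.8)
The plan is to bound the total decrease of $f$ produced over the successful iterations both from above, by $f(x_0)-f_{low}$, and from below, by $|\mathcal{S}_{T(\varepsilon_g)-1}|$ times a guaranteed per-iteration decrease of order $\varepsilon_g^2$. Dividing the two bounds yields \eqref{eq:3.11}.

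First I would fix a successful iteration $k\in\mathcal{S}_{T(\varepsilon_g)-1}$. By definition $\rho_k\geq\alpha$, so combining \eqref{ratio_generalset} with the Cauchy-type decrease \eqref{sdcdf} gives
\[
f(x_k)-f(x_{k+1})\geq\alpha\bigl(m(0)-m(d_k)\bigr)\geq\frac{\alpha}{2}\|\nabla f_{\mathcal{G}_k}(x_k)\|\min\left\{\Delta_k,\frac{\|\nabla f_{\mathcal{G}_k}(x_k)\|}{\|B_k\|}\right\}.
\]
Next I would lower-bound each factor: the inner-loop stopping condition \eqref{graSubcond} gives $\|\nabla f_{\mathcal{G}_k}(x_k)\|>4\varepsilon_g/5$, assumption \textbf{(A4)} gives $\|B_k\|\leq L_g$, and Lemma~\ref{lem:5} gives $\Delta_k\geq\Delta_{\min}(\varepsilon_g)$. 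The one point that needs care is identifying which term attains the minimum. Under \textbf{(A5)} the minimum in \eqref{Es:3412} is realized by the second term, so $\Delta_{\min}(\varepsilon_g)=\frac{(1-\alpha)\varepsilon_g}{5\left[1+2\left(\frac{D_{0}}{\Delta_{\max}}\right)\right]L_g}$; and since $(1-\alpha)<1$ and $1+2(D_0/\Delta_{\max})\geq1$, one checks $\Delta_{\min}(\varepsilon_g)<4\varepsilon_g/(5L_g)\leq\|\nabla f_{\mathcal{G}_k}(x_k)\|/\|B_k\|$, so the minimum above equals $\Delta_{\min}(\varepsilon_g)$. Hence
\[
f(x_k)-f(x_{k+1})\geq\frac{\alpha}{2}\cdot\frac{4\varepsilon_g}{5}\cdot\Delta_{\min}(\varepsilon_g)=\frac{2\alpha(1-\alpha)\varepsilon_g^2}{25\left[1+2\left(\frac{D_{0}}{\Delta_{\max}}\right)\right]L_g}.
\]

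Finally I would telescope. On unsuccessful iterations $x_{k+1}=x_k$, so $f(x_{k+1})=f(x_k)$ and only successful iterations contribute to the total decrease. Summing the previous bound over $k\in\mathcal{S}_{T(\varepsilon_g)-1}$ and using $f(x_{T(\varepsilon_g)})\geq f_{low}$ from \textbf{(A2)},
\[
f(x_0)-f_{low}\geq\sum_{k\in\mathcal{S}_{T(\varepsilon_g)-1}}\bigl(f(x_k)-f(x_{k+1})\bigr)\geq|\mathcal{S}_{T(\varepsilon_g)-1}|\cdot\frac{2\alpha(1-\alpha)\varepsilon_g^2}{25\left[1+2\left(\frac{D_{0}}{\Delta_{\max}}\right)\right]L_g},
\]
and rearranging gives \eqref{eq:3.11}. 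There is no genuine obstacle here: the argument is the standard trust-region accounting, and the only subtlety is verifying that $\Delta_{\min}(\varepsilon_g)$, rather than $\|\nabla f_{\mathcal{G}_k}(x_k)\|/\|B_k\|$, is the active term in the minimum — which is precisely where \textbf{(A5)} enters — together with keeping track of the constants so that they collapse to the stated factor $25\left[1+2(D_0/\Delta_{\max})\right]L_g/\bigl(2\alpha(1-\alpha)\bigr)$.
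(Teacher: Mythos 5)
Your proof is correct and follows essentially the same route as the paper: the per-iteration decrease $f(x_k)-f(x_{k+1})\geq \frac{2\alpha(1-\alpha)}{25\left[1+2\left(D_{0}/\Delta_{\max}\right)\right]L_{g}}\varepsilon_{g}^{2}$ on successful iterations via \eqref{ratio_generalset}, \eqref{sdcdf}, \eqref{graSubcond}, \textbf{(A4)}, Lemma~\ref{lem:5} and \textbf{(A5)}, followed by the same telescoping against $f(x_0)-f_{low}$. The only cosmetic difference is that you argue both terms of the minimum are bounded below by $\Delta_{\min}(\varepsilon_g)$, while the paper first inserts $\Delta_{\min}(\varepsilon_g)$ into the minimum and then evaluates it; these are equivalent, so there is nothing to fix.
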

\begin{proof}
  Let {$k\in S_{T(\varepsilon_g)-1}$}, that is, 
$\rho_{k}\geq\alpha$.  Then, by \eqref{ratio_generalset}, \eqref{sdcdf}, \eqref{graSubcond},  \eqref{Li:Hessi}, Lemma~\ref{lem:5} and (\textbf{A5}), 
\begin{eqnarray}
f(x_{k})-f(x_{k+1})&\geq &\alpha\left[m(0)-m(d_{k})\right]\nonumber\\
&\geq &\frac{\alpha}{2}\|\nabla f_{\mathcal{G}_k}(x_k)\|\min\left\{\Delta_k,\frac{\|\nabla f_{\mathcal{G}_k}(x_k)\|}{\|B_{k}\|}\right\} \nonumber\\
&\geq &\frac{2\alpha\varepsilon_g}{5}\min\left\{\Delta_k,\frac{4\varepsilon_g}{5L_g}\right\} \nonumber\\
&\geq &\frac{2\alpha\varepsilon_g}{5}\min\left\{\Delta_{\min}(\varepsilon_{g}),\frac{4\varepsilon_{g}}{5L_{g}}\right\}\nonumber\\
&= & \dfrac{2\alpha\epsilon_{g}}{5}\dfrac{(1-\alpha)\varepsilon_{g}}{5\left[1+2\left(\frac{D_{0}}{\Delta_{\max}}\right)\right]L_{g}}\nonumber\\
& = &\dfrac{2\alpha(1-\alpha)}{25\left[1+2\left(\frac{D_{0}}{\Delta_{\max}}\right)\right]L_{g}}\varepsilon_{g}^{2}.
\label{eq:gg1}
\end{eqnarray}
Let $\mathcal{S}_{T(\varepsilon_g)-1}^{c}=\left\{0,1,\ldots,T(\varepsilon_g)-1\right\}\setminus \mathcal{S}_{T(\varepsilon_g)-1}$. Notice that, when $k\in\mathcal{S}_{T(\varepsilon_g)-1}^{c}$, we have the equality $f(x_{k+1})=f(x_{k})$. Thus, it follows from  \eqref{inf:f} {and (\ref{eq:gg1})} that
\begin{eqnarray*}
f(x_{0})-f_{low}&\geq & f(x_{0})-f(x_{T(\varepsilon_g)})=\sum_{k=0}^{T(\varepsilon_g)-1}f(x_{k})-f(x_{k+1})\\
&= & \sum_{k\in\mathcal{S}_{T(\varepsilon_g)-1}}f(x_{k})-f(x_{k+1})+\sum_{k\in\mathcal{S}_{T(\varepsilon_g)-1}^{c}}f(x_{k})-f(x_{k+1})\\
&=&\sum_{k\in\mathcal{S}_{T(\varepsilon_g)-1}}f(x_{k})-f(x_{k+1})\\
&\geq &|\mathcal{S}_{T(\varepsilon_g)-1}|\dfrac{2\alpha(1-\alpha)}{25\left[1+2\left(\frac{D_{0}}{\Delta_{\max}}\right)\right]L_{g}}\epsilon_{g}^{2},
\end{eqnarray*}
which implies that (\ref{eq:3.11}) is true.  
\end{proof}

\begin{lemma}
\label{lem:3.578}
Suppose that {\bf (A1)--(A5)} hold and \( T(\varepsilon_g) \geq 1 \). Then,
\begin{equation}
|\mathcal{U}_{T(\varepsilon_g)-1}|\leq\log_{2}\left(\frac{5\left[1+2\left(\frac{D_{0}}{\Delta_{\max}}\right)\right]L_{g} \Delta_{0}}{(1-\alpha)}\epsilon_{g}^{-1}\right)+|\mathcal{S}_{T(\varepsilon_g)-1}|.
\label{eq:3.13}
\end{equation}
\end{lemma}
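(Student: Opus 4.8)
The bound on unsuccessful iterations must come from tracking how the trust-region radius evolves. The core idea is that the radius is divided by 2 on every unsuccessful iteration and at most doubled on every successful one, while Lemma~\ref{lem:5} prevents it from ever dropping below $\Delta_{\min}(\varepsilon_g)$. Combining these facts telescopically over $\{0,1,\dots,T(\varepsilon_g)-1\}$ will convert the count of unsuccessful iterations into a logarithmic term plus the count of successful iterations.

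**Plan.**
First I would recall the update rule from Step~4: if $k\in\mathcal{S}$ then $\Delta_{k+1}\leq 2\Delta_k$ (since $\Delta_{k+1}=\min\{2\Delta_k,\Delta_{\max}\}$), and if $k\in\mathcal{U}$ then $\Delta_{k+1}=\tfrac12\Delta_k$. Taking $\log_2$ and summing over $k=0,\dots,T(\varepsilon_g)-1$, the telescoping sum gives
\[
\log_2\Delta_{T(\varepsilon_g)}-\log_2\Delta_0
=\sum_{k\in\mathcal{S}_{T(\varepsilon_g)-1}}\bigl(\log_2\Delta_{k+1}-\log_2\Delta_k\bigr)
+\sum_{k\in\mathcal{U}_{T(\varepsilon_g)-1}}\bigl(\log_2\Delta_{k+1}-\log_2\Delta_k\bigr),
\]
where each successful term is $\leq 1$ and each unsuccessful term equals $-1$. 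This yields
\[
\log_2\Delta_{T(\varepsilon_g)}-\log_2\Delta_0\leq |\mathcal{S}_{T(\varepsilon_g)-1}|-|\mathcal{U}_{T(\varepsilon_g)-1}|,
\]
so that $|\mathcal{U}_{T(\varepsilon_g)-1}|\leq |\mathcal{S}_{T(\varepsilon_g)-1}|+\log_2\Delta_0-\log_2\Delta_{T(\varepsilon_g)}$.

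**Closing the bound.**
It remains to lower-bound $\Delta_{T(\varepsilon_g)}$. Here I would invoke Lemma~\ref{lem:5}, which guarantees $\Delta_k\geq\Delta_{\min}(\varepsilon_g)$ for all $k\leq T(\varepsilon_g)-1$; combined with the fact that $\Delta_{T(\varepsilon_g)}\geq\tfrac12\Delta_{T(\varepsilon_g)-1}\geq\tfrac12\Delta_{\min}(\varepsilon_g)$, one obtains a lower bound on the terminal radius. Substituting the explicit formula~\eqref{Es:3412} for $\Delta_{\min}(\varepsilon_g)$ into $-\log_2\Delta_{T(\varepsilon_g)}$ produces the stated logarithmic term $\log_2\bigl(5[1+2(D_0/\Delta_{\max})]L_g\Delta_0(1-\alpha)^{-1}\varepsilon_g^{-1}\bigr)$, after the $\Delta_0$ factors combine appropriately. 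A technical point to watch is which branch of the minimum in $\Delta_{\min}(\varepsilon_g)$ is active: under assumption~\textbf{(A5)} the radius $\Delta_0$ exceeds the second term, so $\Delta_{\min}(\varepsilon_g)$ equals the $\varepsilon_g$-dependent expression, and the $\tfrac12$ factor from the terminal step is absorbed into the constant.

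**Main obstacle.**
The only delicate part is bookkeeping the constants so that the final logarithm matches~\eqref{eq:3.13} exactly—in particular reconciling the terminal-radius factor of $\tfrac12$ and the role of \textbf{(A5)} in selecting the active branch of $\Delta_{\min}(\varepsilon_g)$. The telescoping argument itself is routine; the care lies entirely in tracking whether the bound uses $\Delta_{\min}(\varepsilon_g)$ or $\tfrac12\Delta_{\min}(\varepsilon_g)$ and ensuring the $\Delta_0$ in the numerator survives correctly.
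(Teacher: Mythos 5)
Your telescoping argument is the same as the paper's: the paper works multiplicatively with $\omega_k = 1/\Delta_k$ rather than with $\log_2 \Delta_k$, but the content is identical, namely $|\mathcal{U}_{T(\varepsilon_g)-1}| - |\mathcal{S}_{T(\varepsilon_g)-1}| \leq \log_2\bigl(\Delta_0/\Delta_{\min}(\varepsilon_g)\bigr)$ followed by the observation that \textbf{(A5)} forces the $\varepsilon_g$-dependent branch of \eqref{Es:3412} to be active, so that $\Delta_0/\Delta_{\min}(\varepsilon_g)$ is exactly the argument of the logarithm in \eqref{eq:3.13}.

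However, your handling of the terminal radius contains a genuine (if small) flaw. You bound $\Delta_{T(\varepsilon_g)} \geq \tfrac12 \Delta_{T(\varepsilon_g)-1} \geq \tfrac12 \Delta_{\min}(\varepsilon_g)$ and assert that the factor $\tfrac12$ is ``absorbed into the constant.'' It cannot be: \textbf{(A5)} pins $\Delta_{\min}(\varepsilon_g)$ to the exact value $(1-\alpha)\varepsilon_g / \bigl(5\bigl[1+2(D_0/\Delta_{\max})\bigr]L_g\bigr)$, so your route yields \eqref{eq:3.13} with an extra additive $+1$ (the $\log_2 2$), i.e.\ a strictly weaker bound than the one stated. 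The correct fix, and what the paper implicitly does, is to note that the lower bound \eqref{Es:341} extends to $k = T(\varepsilon_g)$ itself: the induction step in the proof of Lemma~\ref{lem:5}, applied at $k = T(\varepsilon_g)-1$, produces the bound at $k+1 = T(\varepsilon_g)$, since it only requires $\|\nabla f_{\mathcal{G}_k}(x_k)\| > 4\varepsilon_g/5$ (guaranteed by Step 1) and Lemma~\ref{lem:4} at iteration $T(\varepsilon_g)-1$. Concretely, either $\Delta_{T(\varepsilon_g)-1}$ lies below the Case I threshold, in which case iteration $T(\varepsilon_g)-1$ is successful and $\Delta_{T(\varepsilon_g)} \geq \Delta_{T(\varepsilon_g)-1} \geq \Delta_{\min}(\varepsilon_g)$, or it lies above, in which case $\Delta_{T(\varepsilon_g)} \geq \tfrac12 \Delta_{T(\varepsilon_g)-1} > (1-\alpha)\varepsilon_g / \bigl(5\bigl[1+2(D_0/\Delta_{\max})\bigr]L_g\bigr) \geq \Delta_{\min}(\varepsilon_g)$. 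With $\Delta_{T(\varepsilon_g)} \geq \Delta_{\min}(\varepsilon_g)$ in hand, your telescoping gives \eqref{eq:3.13} exactly.
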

\begin{proof}
By the update rules for $\Delta_{k}$ in Algorithm~1, we have
\begin{eqnarray*}
\Delta_{k+1}&=&\frac{1}{2}\Delta_{k},\,\,\text{if}\,\,k\in\mathcal{U}_{T(\varepsilon_g)-1},\\
\Delta_{k+1}&\leq & 2\Delta_{k},\,\,\,\text{if}\,\,k\in\mathcal{S}_{T(\varepsilon_g)-1}
\end{eqnarray*}
In addition, by Lemma~\ref{lem:5} we have
\begin{equation*}
\Delta_k \geq \Delta_{\min}(\varepsilon_g)\quad\text{for}\quad k=0,\ldots,T(\varepsilon_g),
\end{equation*}
where $\Delta_{\min}(\varepsilon_g)$ is as in \eqref{Es:3412}. Thus, defining  $\omega_{k}:=1/\Delta_{k}$, it follows that
\begin{eqnarray}
2\omega_{k}&=&\omega_{k+1},\quad\text{if}\,\,k\in\mathcal{U}_{T(\varepsilon_g)-1},\label{eq:3.14}\\
\frac{1}{2}\omega_{k}&\leq & \omega_{k+1},\quad\text{if}\,\,k\in\mathcal{S}_{T(\varepsilon_g)-1},
\label{eq:3.16}
\end{eqnarray}
and
\begin{equation}
\omega_{k}\leq(\Delta_{\min}(\varepsilon_g))^{-1}\quad\text{for}\quad k=0,\ldots,T(\varepsilon_g).
\label{eq:3.17}
\end{equation}
In view of (\ref{eq:3.14})-(\ref{eq:3.17}), we have
\begin{equation*}
2^{\left|\mathcal{U}_{T(\varepsilon_g)-1}\right|}\left(0.5\right)^{\left|\mathcal{S}_{T(\varepsilon_g)-1}\right|}\omega_{0}\leq\omega_{T(\varepsilon_g)}\leq(\Delta_{\min}(\varepsilon_g))^{-1}.
\end{equation*}
Then, taking the logarithm in both sides we get
\begin{equation}\label{eq:pr45} 
\left|\mathcal{U}_{T(\varepsilon_g)-1}\right|-|\mathcal{S}_{T(\varepsilon_g)-1}|\leq\log_{2}\left(\frac{(\Delta_{\min}(\varepsilon_g))^{-1}}{\omega_{0}}\right)=\log_{2}\left(\frac{\Delta_{0}}{\Delta_{\min}(\varepsilon_g)}\right).
\end{equation}
On the other hand, using  \eqref{Es:3412} and (\textbf{A5}), we obtain 
\[
\frac{\Delta_{0}}{\Delta_{\min}(\varepsilon_g)}=\frac{5\left[1+2\left(\frac{D_{0}}{\Delta_{\max}}\right)\right]L_{g}\Delta_{0}}{(1-\alpha)}\epsilon_{g}^{-1}.
\]
The inequality in (\ref{eq:3.13}) now follows by combining the last two inequalities. 
\end{proof}

Combining the preceding results, we derive the following worst-case complexity bound for the number of iterations needed by Algorithm~1 to achieve an \(\varepsilon_g\)-approximate stationary point.

\begin{theorem} \label{thm:3.1}
Suppose that {\bf (A1)--(A5)} hold, and let $T(\varepsilon_g)$ be as in  \eqref{eq:hitting}. Then,
\begin{equation}
T(\varepsilon_g)\leq \dfrac{25\left[1+2\left(\frac{D_{0}}{\Delta_{\max}}\right)\right]L_{g}(f(x_{0})-f_{low})}{\alpha(1-\alpha)}\epsilon_{g}^{-2}+\log_{2}\left(\frac{5\left[1+2\left(\frac{D_{0}}{\Delta_{\max}}\right)\right]L_{g}{\Delta_{0}}}{(1-\alpha)}\epsilon_{g}^{-1}\right) + 1,
\label{eq:3.23}
\end{equation}
\end{theorem}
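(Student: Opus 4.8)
The plan is to obtain \eqref{eq:3.23} as a bookkeeping combination of the two counting estimates in Lemmas~\ref{lem:3.454} and \ref{lem:3.578}, fed through the decomposition \eqref{eq:motivation}, after first disposing of the degenerate case separately. I would begin by treating $T(\varepsilon_g)=0$: here $x_0$ is already an $\varepsilon_g$-approximate stationary point, so it suffices to check that the right-hand side of \eqref{eq:3.23} is at least $1$. Its first summand is nonnegative because $f(x_0)\geq f_{low}$ by \eqref{inf:f}, and assumption \textbf{(A5)} gives $\Delta_0\geq \frac{(1-\alpha)\varepsilon_g}{5[1+2(D_0/\Delta_{\max})]L_g}$, which rearranges to $\frac{5[1+2(D_0/\Delta_{\max})]L_g\Delta_0}{1-\alpha}\varepsilon_g^{-1}\geq 1$, so the logarithmic term is nonnegative as well. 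The trailing $+1$ then forces the right-hand side to be $\geq 1 > 0 = T(\varepsilon_g)$, settling this case.

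For the main case $T(\varepsilon_g)\geq 1$, I would start from \eqref{eq:motivation}, which yields
\[
T(\varepsilon_g)\leq |\mathcal{S}_{T(\varepsilon_g)-1}|+|\mathcal{U}_{T(\varepsilon_g)-1}|,
\]
and then substitute the bound of Lemma~\ref{lem:3.578} for $|\mathcal{U}_{T(\varepsilon_g)-1}|$, which collapses the unsuccessful count into the successful count plus the logarithmic term and gives $T(\varepsilon_g)\leq 2|\mathcal{S}_{T(\varepsilon_g)-1}|+\log_2(\cdots)$. The key algebraic point is that the factor $2$ introduced here is exactly matched by the factor $2$ in the denominator $2\alpha(1-\alpha)$ of the bound of Lemma~\ref{lem:3.454}, so substituting that bound produces precisely the coefficient $\frac{25[1+2(D_0/\Delta_{\max})]L_g(f(x_0)-f_{low})}{\alpha(1-\alpha)}$ of $\varepsilon_g^{-2}$ appearing in \eqref{eq:3.23}. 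Since the estimate obtained this way is already $\leq$ the right-hand side of \eqref{eq:3.23} without the $+1$, appending $+1$ only weakens it, so the inequality holds in this case as well, and the two cases merge into the single stated bound.

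I do not expect a genuine obstacle, since the theorem is essentially an arithmetic assembly of the preceding lemmas; the only points requiring care are the clean handling of $T(\varepsilon_g)=0$ and the verification via \textbf{(A5)} that the logarithmic term is nonnegative, which together justify the uniform $+1$. A secondary subtlety worth flagging is that Lemmas~\ref{lem:3.454} and \ref{lem:3.578} are phrased for finite $T(\varepsilon_g)$; to be fully rigorous one should note that their proofs in fact bound $|\mathcal{S}_k|$ and $|\mathcal{U}_k|$ uniformly over all $k<T(\varepsilon_g)$, so the very same estimates preclude $T(\varepsilon_g)=\infty$ and thereby guarantee finiteness before the final substitution is made.
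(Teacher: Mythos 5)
Your proof is correct and takes essentially the same route as the paper: dispose of the degenerate case, then combine the decomposition \eqref{eq:motivation} with the bounds of Lemmas~\ref{lem:3.454} and \ref{lem:3.578}, noting that the factor $2$ from merging $|\mathcal{U}_{T(\varepsilon_g)-1}|$ into $|\mathcal{S}_{T(\varepsilon_g)-1}|$ turns the denominator $2\alpha(1-\alpha)$ into $\alpha(1-\alpha)$. The paper's version is terser (it splits at $T(\varepsilon_g)\leq 1$ rather than $T(\varepsilon_g)=0$ and leaves the arithmetic implicit), so your explicit verification via \textbf{(A5)} that the logarithmic term is nonnegative, and your remark on finiteness of $T(\varepsilon_g)$, merely spell out what the paper leaves to the reader.
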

\begin{proof}
If $T(\varepsilon_g)\leq 1$, then (\ref{eq:3.23}) is clearly true. Let us assume that $T(\varepsilon_g)\geq 2$. By (\ref{eq:motivation}),
\begin{equation*}
T(\varepsilon_g)\leq  \left|\mathcal{S}_{T(\varepsilon_g)-1}\right|+\left|\mathcal{U}_{T(\varepsilon_g)-1}\right|.
\end{equation*}
Then, (\ref{eq:3.23}) follows directly from Lemmas \ref{lem:3.454} and \ref{lem:3.578}.
\end{proof}

\section{Method for Computing Approximate Second-Order Critical Points}\label{sec:4}

In this section, we propose and analyze a variant of Algorithm~1 designed to find an $(\varepsilon_{g},\varepsilon_{H})$-approximate second-order critical point of $f$,  i.e., a point $x_{k}$ such that
 \begin{equation*}
     \|\nabla f(x_{k})\|\leq\varepsilon_{g}\quad\text{and}\quad \lambda_{\min}\left(\nabla^{2}f(x_{k})\right)\geq -\varepsilon_{H}.
 \end{equation*}
This method involves not only adjusting the gradient sample sizes but also accurately updating the Hessian approximations via subsampling techniques.
 We begin with a detailed description of the modified scheme.

\begin{mdframed}
\noindent\textbf{Algorithm 2:} \textbf{Second-Order sub-sampled-TR Method} 
\\[0.2cm]
\noindent\textbf{Step 0.} Given $x_0 \in \R^n$, $\varepsilon_g,\varepsilon_H  > 0$, $\gamma>1,$ $\alpha \in (0, 1)$, and $\Delta_{\max}\geq\Delta_{0}>0$, set $k:=0$.
\\[0.2cm]
\noindent\textbf{Step 1.} Let $j:=0$.
\\[0.2cm]
\textbf{Step 1.1.} Define 
\begin{equation} \label{def:h_k2345}
  h^j_{k,g}:=\dfrac{1}{\gamma^{j}}\left(\dfrac{\Delta_{k}}{\Delta_{\max}}\right)^{2}, 
\end{equation}
and choose $\G_{k}^j\subset \N$ such that $|\mathcal{G}_{k}^j|\geq\ceil{(1-h^j_{k,g})|\N|}$.
\\[0.2cm]
\textbf{Step 1.2.} Compute $\nabla f_{\mathcal{G}_k^j}(x_k)$. If 
\begin{equation}\label{graSubcond2354}
\|\nabla f_{\mathcal{G}_k^j}(x_k)\|> \frac{4\varepsilon_g}5,   
\end{equation}
set $j_k=j$ and $\G_k:=\mathcal{G}_k^{j_k}$, choose $\mathcal{H}_k\subset \N$ and compute $\nabla^2 f_{\mathcal{H}_k}(x_k)$,  and go to Step~2.
\\[0.2cm]
 \textbf{Step 1.3.} Define 
\begin{equation} \label{def:h_k59}
h^j_{k,H}:=\frac{\Delta_k}{\gamma^j\Delta_{\max}},
\end{equation}
and choose $\mathcal{H}_k^j\subset \N$ such that $|\mathcal{H}_k^j|\geq\ceil{(1-h^j_{k,H})|\N|}$.
\\[0.2cm]
\textbf{Step 1.4.} Compute $\nabla^2 f_{\mathcal{H}_k^j}(x_k)$. If 
\begin{equation}\label{graSubcond2245}
-\lambda_{\min}(\nabla^2 f_{\mathcal{H}_k^j}(x_k))>\frac{4\varepsilon_H}5,   
\end{equation}
set $j_k=j$, $\G_k:=\mathcal{G}_k^{j_k}$ and $\mathcal{H}_k:=\mathcal{H}_k^{j_k}$, and go to Step~2. 
Otherwise, set $j:=j+1$ and go  to Step 1.1. 
\\[0.2cm]
\noindent\textbf{Step 2} Compute an approximate solution $d_{k}$ of the trust-region subproblem \eqref{eq:subproblem} with $B_k=\nabla^2 f_{\mathcal{H}_k}(x_k)$ such that
\begin{equation}\label{sdcdf1}
   m_k(0)-m_k(d_k) \geq \max\left\{\frac12\|\nabla f_{\mathcal{G}_k}(x_k)\|\min\left\{\Delta_k,\frac{\|\nabla f_{\mathcal{G}_k}(x_k)\|}{\|\nabla^2 f_{\mathcal{H}_k}(x_k)\|}\right\},-\lambda_{\min}(\nabla^2 f_{\mathcal{H}_k}(x_k))\Delta_k^2\right\}.
    \end{equation}
\noindent\textbf{Step 3.} 
Compute \(\rho_k\) as in \eqref{ratio_generalset}, and update \(x_{k+1}\) and \(\Delta_{k+1}\) {as in} Step~4 of Algorithm~1. Set \(k := k+1\), and return to Step 1.
\end{mdframed}
\vspace{0.3cm}
\begin{remark}
In  Algorithm~2, where the goal is to find second-order critical points, greater care must be taken when updating the Hessian subsample size. Specifically, the size can be arbitrary if \eqref{graSubcond2354} is satisfied; otherwise, it must adhere to the rule defined in Step 1.3.
Moreover, the inexact criteria for solving the TR subproblem require a condition involving second-order information.
\end{remark}

As in Section~\ref{sec:3}, we will use the index sets \(\mathcal{S}_k\) and \(\mathcal{U}_k\), as defined in \eqref{indexSU}. Moreover, we define  
\begin{equation}
T(\varepsilon_g, \varepsilon_H) = \inf \left\{ k \in \mathbb{N} : \|\nabla f(x_k)\| \leq \varepsilon_g, \; \text{and} \; \lambda_{\min}(\nabla^2 f(x_k)) > -\varepsilon_H \right\}
\label{eq:defhitting}
\end{equation}
as the index of the first iteration for which $x_{k}$ is an \((\varepsilon_g, \varepsilon_H)\)-approximate second-order stationary point. Our goal is to establish a finite upper bound for \(T(\varepsilon_g, \varepsilon_H)\), where  
\begin{equation}
T(\varepsilon_g, \varepsilon_H) = |\mathcal{S}_{T(\varepsilon_g, \varepsilon_H) - 1} \cup \mathcal{U}_{T(\varepsilon_g, \varepsilon_H) - 1}| \leq |\mathcal{S}_{T(\varepsilon_g, \varepsilon_H) - 1}| + |\mathcal{U}_{T(\varepsilon_g, \varepsilon_H) - 1}|,
\label{eq:ggextra}
\end{equation}
when \(T(\varepsilon_g, \varepsilon_H) \geq 1\).

The next lemma shows, in particular, that \(\{x_k\}_{k=0}^{T(\varepsilon_g, \varepsilon_H)}\) is well-defined and that the inner procedure terminates in a finite number of trials.

\begin{lemma}\label{lem:3345}
Suppose that {\bf (A1)--(A3)} hold and \( T(\varepsilon_g,\varepsilon_H) \geq 1 \). 
Then, the sequence $\{x_k\}_{k=0}^{T(\varepsilon_g,\varepsilon_H)}$ is well-defined and  is contained in $\cL_f(x_0)$. Moreover, 
the inner sequence  $\{j_k\}_{k=0}^{T(\varepsilon_g,\varepsilon_H)-1}$ satisfies
\begin{equation}
0\leq j_{k}\leq 1+\max\left\{\log_{\gamma}\left(10L_{g}D_{0}\epsilon_{g}^{-1}\right),\log_{\gamma}\left(10L_{g}\epsilon_{H}^{-1}\right),0\right\}:=\bar{j}_{max}.
\label{eq:3.709}
\end{equation}
\end{lemma}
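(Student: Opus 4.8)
The plan is to run the same induction on $k$ used in Lemma~\ref{lem:3}, the only genuinely new ingredient being the two-test structure of the inner loop of Algorithm~2 (Steps~1.2 and~1.4). The base case $x_0\in\cL_f(x_0)$ is immediate. In the inductive step I would assume $x_k\in\cL_f(x_0)$ with $k\leq T(\varepsilon_g,\varepsilon_H)-1$ and establish, in order: (i) the bound $j_k\leq\bar j_{\max}$, which guarantees that the inner loop halts so that $x_{k+1}$ is well defined; and (ii) $x_{k+1}\in\cL_f(x_0)$.

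For the bound on $j_k$, the key observation is that the inner loop advances from trial $j$ to $j+1$ exactly when \emph{both} tests fail, i.e. $\|\nabla f_{\mathcal{G}_k^j}(x_k)\|\leq 4\varepsilon_g/5$ and $-\lambda_{\min}(\nabla^2 f_{\mathcal{H}_k^j}(x_k))\leq 4\varepsilon_H/5$. Arguing by contradiction, suppose $j_k>\bar j_{\max}$; then $j_k-1$ exceeds each of $\log_\gamma(10L_gD_0\varepsilon_g^{-1})$, $\log_\gamma(10L_g\varepsilon_H^{-1})$ and $0$, whence $\gamma^{-(j_k-1)}<\min\{\varepsilon_g/(10L_gD_0),\,\varepsilon_H/(10L_g)\}$. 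Since $\Delta_k\leq\Delta_{\max}$, the definitions \eqref{def:h_k2345} and \eqref{def:h_k59} give $h_{k,g}^{j_k-1}\leq\gamma^{-(j_k-1)}$ and $h_{k,H}^{j_k-1}\leq\gamma^{-(j_k-1)}$, so both sampling fractions at trial $j_k-1$ lie below the corresponding thresholds.

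I would then split according to which stationarity test $x_k$ violates; at least one must fail because $k\leq T(\varepsilon_g,\varepsilon_H)-1$ forces either $\|\nabla f(x_k)\|>\varepsilon_g$ or $-\lambda_{\min}(\nabla^2 f(x_k))\geq\varepsilon_H$ by \eqref{eq:defhitting}. If $\|\nabla f(x_k)\|>\varepsilon_g$, Lemma~\ref{lem:2}(c) with $x:=x_k$, $h:=h_{k,g}^{j_k-1}$, $\mathcal{G}:=\mathcal{G}_k^{j_k-1}$ gives $\|\nabla f_{\mathcal{G}_k^{j_k-1}}(x_k)\|>4\varepsilon_g/5$, so \eqref{graSubcond2354} would have fired at trial $j_k-1$, contradicting minimality of $j_k$. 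Otherwise $\|\nabla f(x_k)\|\leq\varepsilon_g$, the gradient test fails at $j_k-1$ so that trial does reach Step~1.4, and $-\lambda_{\min}(\nabla^2 f(x_k))\geq\varepsilon_H$; Lemma~\ref{lem:2}(d) then yields $-\lambda_{\min}(\nabla^2 f_{\mathcal{H}_k^{j_k-1}}(x_k))>4\varepsilon_H/5$, so \eqref{graSubcond2245} would have fired, again a contradiction. Hence $j_k\leq\bar j_{\max}$, the loop terminates, and $x_{k+1}$ is well defined. Finally, exactly as for Algorithm~1, unsuccessful steps leave $x_k$ unchanged while successful ones satisfy $f(x_k)-f(x_{k+1})\geq\alpha\,[m_k(0)-m_k(d_k)]>0$, the model decrease being strictly positive by \eqref{sdcdf1} (one of the two termination tests makes the relevant term in \eqref{sdcdf1} positive). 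Thus $f(x_{k+1})\leq f(x_k)\leq f(x_0)$ and $x_{k+1}\in\cL_f(x_0)$, closing the induction.

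The main obstacle I anticipate is the negative-curvature branch. Unlike the first-order setting, \eqref{eq:defhitting} yields only the \emph{non-strict} bound $-\lambda_{\min}(\nabla^2 f(x_k))\geq\varepsilon_H$, whereas Lemma~\ref{lem:2}(d) is phrased with a strict hypothesis; I must check this mismatch is harmless. The point is that $h_{k,H}^{j_k-1}<\varepsilon_H/(10L_g)$ is \emph{strict}, so that chaining Lemma~\ref{lem:2}(b) gives $\varepsilon_H\leq-\lambda_{\min}(\nabla^2 f(x_k))\leq-\lambda_{\min}(\nabla^2 f_{\mathcal{H}_k^{j_k-1}}(x_k))+2h_{k,H}^{j_k-1}L_g<-\lambda_{\min}(\nabla^2 f_{\mathcal{H}_k^{j_k-1}}(x_k))+\varepsilon_H/5$, which still delivers the strict conclusion $-\lambda_{\min}(\nabla^2 f_{\mathcal{H}_k^{j_k-1}}(x_k))>4\varepsilon_H/5$ needed to fire \eqref{graSubcond2245}. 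The secondary point to verify is precisely that trial $j_k-1$ reaches Step~1.4 (it does, since the gradient test failed there), so that the Hessian sample $\mathcal{H}_k^{j_k-1}$ is actually generated and the above estimate is meaningful.
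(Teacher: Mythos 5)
Your proof is correct and follows exactly the route the paper intends: the paper omits this argument, saying only that it mirrors Lemma~\ref{lem:3} using Lemma~\ref{lem:2}(c) and (d), and your induction-plus-minimality contradiction is precisely that mirror, with the two-test inner loop correctly handled by observing that both tests must fail at trial $j_k-1$ for the loop to have advanced (so the Hessian sample $\mathcal{H}_k^{j_k-1}$ is indeed generated). Your treatment of the non-strict inequality $-\lambda_{\min}(\nabla^2 f(x_k))\geq\varepsilon_H$ inherited from \eqref{eq:defhitting}, patched via the strictness of $h_{k,H}^{j_k-1}<\varepsilon_H/(10L_g)$ and Lemma~\ref{lem:2}(b), fixes a small imprecision that the paper's one-line proof glosses over.
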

\begin{proof}
Using statements (c) and (d) of Lemma  \ref{lem:2}, the proof is similar to that of Lemma~\ref{lem:3}, and is therefore omitted. 
\end{proof}

To present the second-order iteration complexity bound for Algorithm~2, the following assumption is required:
\begin{itemize}
\item[\textbf{(A6)}] {The Hessian  $\nabla^2 f_{\mathcal{H}}$ is $L_H$-Lipschitz continuous  for every $\mathcal{H}\subset \mathcal{N}$, i.e.,
\begin{equation*}
\|\nabla^2 f_{\mathcal{H}}(y)-\nabla^2 f_{\mathcal{H}}(x)\|\leq L_H\|y-x\|,\quad \forall x,y\in\mathbb{R}^{n}.
\end{equation*}
}
\end{itemize}
It follows trivially from $\bf{A6}$ that
 \begin{equation}\label{eq:90e1}
f_{\mathcal{H}}(y)\leq f_{\mathcal{H}}( x)+\langle \nabla f_{\mathcal{H}}( x),y- x\rangle+\dfrac{1}{2}\langle \nabla^2 f_{\mathcal{H}}( x)(y- x),y- x\rangle+\dfrac{L_H}{6}\|y- x\|^{3}, \quad \forall \; \mathcal{H}\subset\mathcal{N},  x,y \in \R^n.
\end{equation}
Next, we derive a sufficient condition to guarantee the success of an iteration.

\begin{lemma}\label{lem:434}
Suppose that {\bf (A1)--(A3)} and {\bf (A6)}  hold and \( T(\varepsilon_g,\varepsilon_H) \geq 1 \).  Given $k\leq T(\varepsilon_g,\varepsilon_H)-1$, if 
\begin{equation}\label{Es:delta12}
    \Delta_k \leq {\bar\Delta(\varepsilon_{g},\varepsilon_{H})}:=\min\left\{\dfrac{2(1-\alpha)\epsilon_{g}}{5\left[1+2\left(\frac{\Delta_{0}}{\Delta_{\max}}\right)\right]L_{g}},\dfrac{4(1-\alpha)\epsilon_{H}}{5\left[\frac{L_{H}}{6}+2\left(\frac{D_{0}}{\Delta_{\max}}+1\right)\left(\frac{L_{g}}{\Delta_{\max}}\right)\right]}\right\},
\end{equation}
 then $\rho_k\geq \alpha$ (that is, $k \in \mathcal{S}_{T(\varepsilon_g,\varepsilon_H)-1}$).
\end{lemma}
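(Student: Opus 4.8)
The plan is to adapt the argument of Lemma~\ref{lem:4}, but the analysis must now branch according to how the inner loop of Algorithm~2 terminates at iteration $k$, because the two exit points supply qualitatively different information about the sample $\mathcal{H}_k$. Concretely, either \textbf{(I)} the loop stops at Step~1.2, so that $\|\nabla f_{\mathcal{G}_k}(x_k)\|>4\varepsilon_g/5$ while $\mathcal{H}_k$ is chosen arbitrarily, or \textbf{(II)} it stops at Step~1.4, so that $-\lambda_{\min}(\nabla^2 f_{\mathcal{H}_k}(x_k))>4\varepsilon_H/5$ and $\mathcal{H}_k=\mathcal{H}_k^{j_k}$ obeys the sampling rule of Step~1.3, giving $h_{k,H}^{j_k}=\Delta_k/(\gamma^{j_k}\Delta_{\max})\le\Delta_k/\Delta_{\max}$. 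In both cases I start from
\[
1-\rho_k=\frac{f(x_k+d_k)-m_k(d_k)}{m_k(0)-m_k(d_k)},
\]
and aim to bound the numerator from above and the denominator from below so that the quotient stays at most $1-\alpha$.

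For Case~(I), I would argue exactly as in Lemma~\ref{lem:4}. Using the quadratic bound~\eqref{eq:2.2}, Cauchy--Schwarz, Lemma~\ref{lem:2}(a) together with $h_{k,g}^{j_k}\le(\Delta_k/\Delta_{\max})^2\le\Delta_k/\Delta_{\max}$, and the uniform estimate $\|\nabla^2 f_{\mathcal{H}_k}(x_k)\|\le L_g$ from~\eqref{Li:Hessi} (valid for \emph{any} choice of $\mathcal{H}_k$), I obtain $f(x_k+d_k)-m_k(d_k)\le[1+2(D_0/\Delta_{\max})]L_g\Delta_k^2$. The crucial observation is that, since $\mathcal{H}_k$ carries no accuracy guarantee here, I must discard its curvature content and bound $-\tfrac12\langle \nabla^2 f_{\mathcal{H}_k}(x_k)d_k,d_k\rangle$ through its norm alone. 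Combining this with the gradient branch of~\eqref{sdcdf1}, with $\|\nabla f_{\mathcal{G}_k}(x_k)\|>4\varepsilon_g/5$ and with $\Delta_k\le\|\nabla f_{\mathcal{G}_k}(x_k)\|/L_g$ (which follows from the first entry of $\bar\Delta$), yields $m_k(0)-m_k(d_k)\ge\tfrac{2\varepsilon_g}{5}\Delta_k$, whence $1-\rho_k\le\frac{5[1+2(D_0/\Delta_{\max})]L_g}{2\varepsilon_g}\Delta_k\le1-\alpha$ precisely when $\Delta_k$ is bounded by the first term in the minimum defining $\bar\Delta$.

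For Case~(II), I would instead invoke the cubic model error. Applying~\eqref{eq:90e1} with $\mathcal{H}=\mathcal{N}$ (so that $f_{\mathcal{N}}=f$) produces the third-order Taylor bound for $f$ itself, giving
\[
f(x_k+d_k)-m_k(d_k)\le\|\nabla f(x_k)-\nabla f_{\mathcal{G}_k}(x_k)\|\,\|d_k\|+\tfrac12\|\nabla^2 f(x_k)-\nabla^2 f_{\mathcal{H}_k}(x_k)\|\,\|d_k\|^2+\tfrac{L_H}{6}\|d_k\|^3.
\]
Now both sampling errors are genuinely controlled: Lemma~\ref{lem:2}(a) with $h_{k,g}^{j_k}\le(\Delta_k/\Delta_{\max})^2$ makes the first term $\mathcal{O}(\Delta_k^3)$, while the second inequality of~\eqref{eq:2} (as in~\eqref{er:908}) with $h_{k,H}^{j_k}\le\Delta_k/\Delta_{\max}$ makes the second term $\mathcal{O}(\Delta_k^3)$ as well; together with the Lipschitz remainder this gives $f(x_k+d_k)-m_k(d_k)\le C\,\Delta_k^3$ for a constant $C$ of the form $\tfrac{L_H}{6}+(\text{const})\tfrac{L_g}{\Delta_{\max}}$ matching the bracket in the second entry of $\bar\Delta$. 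Dividing by the curvature branch of~\eqref{sdcdf1}, namely $m_k(0)-m_k(d_k)\ge-\lambda_{\min}(\nabla^2 f_{\mathcal{H}_k}(x_k))\Delta_k^2>\tfrac{4\varepsilon_H}{5}\Delta_k^2$, yields $1-\rho_k\le\frac{5C}{4\varepsilon_H}\Delta_k\le1-\alpha$ exactly when $\Delta_k$ is at most the second entry of $\bar\Delta$. Since $\Delta_k\le\bar\Delta$ bounds $\Delta_k$ by the minimum of the two thresholds, $\rho_k\ge\alpha$ in either case.

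The main obstacle I anticipate is setting up the case distinction correctly and recognizing that the two exit routes demand different tools: in Case~(I) the sample $\mathcal{H}_k$ has no accuracy guarantee, so only the uniform bound~\eqref{Li:Hessi} is available and one is forced onto the quadratic estimate and the gradient branch of the model decrease; whereas in Case~(II) one genuinely exploits Hessian accuracy together with the cubic estimate and the curvature branch. A secondary, purely bookkeeping difficulty is checking that the $\mathcal{O}(\Delta_k^3)$ gradient-error contribution (arising from the \emph{squared} sampling rule~\eqref{def:h_k2345}) and the $\mathcal{O}(\Delta_k^3)$ Hessian-error contribution assemble into precisely the constant appearing in the denominator of the second term of $\bar\Delta$.
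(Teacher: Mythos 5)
Your proposal is correct and follows essentially the same route as the paper's own proof: the identical two-case split according to how the inner loop exits (gradient test at Step~1.2 versus curvature test at Step~1.4), with Case~I handled exactly as in Lemma~\ref{lem:4} via the quadratic bound \eqref{eq:2.2}, the uniform Hessian bound \eqref{Li:Hessi} and the gradient branch of \eqref{sdcdf1}, and Case~II handled via the cubic bound \eqref{eq:90e1} applied to $f$, the sampling-error estimates from Lemma~\ref{lem:2}(a) and \eqref{er:908} with $h_{k,g}^{j_k}\leq(\Delta_k/\Delta_{\max})^2$ and $h_{k,H}^{j_k}\leq\Delta_k/\Delta_{\max}$, and the curvature branch of \eqref{sdcdf1}. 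The constants you assemble in each case coincide with the two entries of $\bar\Delta(\varepsilon_g,\varepsilon_H)$ in \eqref{Es:delta12}, so no gap remains.
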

\begin{proof} It follows from \eqref{ratio_generalset} and  \eqref{eq:subproblem} with $B_k=\nabla^2 f_{\mathcal{H}_k}(x_k)$ that 
    \begin{align}\nonumber
1-\rho_{k}&=\frac{m(0)-m(d_{k})-f(x_k)+
f(x_k+d_{k})}{m(0)-m(d_{k})}\\ \nonumber
&= \frac{
f(x_k+d_{k})-f(x_k)-\langle \nabla f_{\mathcal{G}_k}(x_k),d_k\rangle-\dfrac{1}{2}\langle \nabla^2f_{\mathcal{H}_k}(x_k)d_k,d_k\rangle}{m(0)-m(d_{k})}\\
&= \frac{f(x_k+d_{k})-f(x_k)-\langle \nabla f(x_k),d_k\rangle-\langle \nabla f_{\mathcal{G}_k}(x_k)-\nabla f(x_k),d_k\rangle-\dfrac{1}{2}\langle \nabla^2f_{\mathcal{H}_k}(x_k)d_k,d_k\rangle}{m(0)-m(d_{k})}. \label{eq:564}
\end{align}
We now consider two case:\\[2mm]
\textbf{Case I:} $\|\nabla f_{\mathcal{G}_k}(x_k)\|> 4\varepsilon_g/5$.
\\[0.2cm]
From \eqref{eq:564}, \eqref{eq:2.2}, the Cauchy-Schwartz inequality and Lemma~\ref{lem:2}(a), we find
\begin{align}
 1-\rho_{k}&\leq   \frac{ {L_g}\|d_k\|^2+ \|\nabla f_{\mathcal{G}_k}(x_k)-\nabla f(x_k)\|\|d_k\| }{m(0)-m(d_{k})}\nonumber\\
 &\leq   \frac{ {L_g}\|d_k\|^2+ 2h_{k,g}^{j_k}L_g\D_0\|d_k\| }{m(0)-m(d_{k})}.
 \label{eq:gg2}
\end{align}
In view of \eqref{def:h_k2345}, $\Delta_{k}\leq\Delta_{\max}$ and $\gamma>1$, we have  
\begin{equation}
h_{k,g}^{j_{k}}=\dfrac{1}{\gamma^{j_{k}}}\left(\frac{\Delta_{k}}{\Delta_{\max}}\right)^{2}\leq\dfrac{\Delta_{k}}{\Delta_{\max}}.
\label{eq:gg3}
\end{equation}
Then, combining (\ref{eq:gg2}), (\ref{eq:gg3}) and $\|d_{k}\|\leq\Delta_{k}$, it follows that
\begin{align}\label{er:901}
 1-\rho_{k}
 &\leq   \frac{\left[1+2\left(\frac{D_{0}}{\Delta_{\max}}\right)\right]L_{g}\Delta_{k}^{2}}{m(0)-m(d_{k})}.
\end{align}
On the other hand, since 
\[
\Delta_k \leq \bar\Delta(\varepsilon_{g},\varepsilon_{H})\leq \frac{4\varepsilon_g}{5L_g},
\]
it follows from \eqref{Li:Hessi} and  \eqref{sdcdf1}  that
\begin{align*}
 \frac{ 1}{m(0)-m(d_{k})}
 \leq \frac{2}{\|\nabla f_{\mathcal{G}_k}(x_k)\|\min\left\{\Delta_k,\frac{\|\nabla f_{\mathcal{G}_k}(x_k)\|}{L_g}\right\} }
  \leq \frac{5}{2\varepsilon_g\min\left\{\Delta_k,\frac{4\varepsilon_g}{5L_g}\right\} }
 \leq \frac{5}{2\varepsilon_g\Delta_k}.
\end{align*}
By combining the last  inequality with \eqref{er:901}, 
\begin{align*}
 1-\rho_{k}
 &\leq   \frac{ 5\left[1+2\left(\frac{D_{0}}{\Delta_{\max}}\right)\right]L_{g}\Delta_{k}}{2\varepsilon_g}.
\end{align*}
Therefore, the desired inequality follows now from \eqref{Es:delta12}.
\\[2mm]
\textbf{Case II:} $-\lambda_{\min}(\nabla^2 f_{\mathcal{H}_k}(x_k))>{4\varepsilon_H}/5$.
\\[0.2cm]
From \eqref{eq:564} and {$\|d_{k}\|\leq\Delta_{k}$, we obtain 
\begin{align}
1-\rho_{k}&\leq \frac{f(x_k+d_{k})-f(x_k)-\langle \nabla f(x_k),d_k\rangle-\dfrac{1}{2}\langle \nabla^2f(x_k)d_k,d_k\rangle}{m(0)-m(d_{k})}\nonumber \\
&+\frac{\| \nabla f_{\mathcal{G}_k}(x_k)-\nabla f(x_k)\|\|d_k\|+ \dfrac{1}{2}\| \nabla^2f_{\mathcal{H}_k}(x_k)- \nabla^2f(x_k))\| \|d_k\|^2}{m(0)-m(d_{k})}\nonumber\\
&\leq\frac{\frac{L_H\|d_k\|^3}{6}+ 2h_{k,g}^{j_{k}}L_g\D_0\|d_k\|+ 2h_{k,H}^{j_k}L_g \|d_k\|^2}{m(0)-m(d_{k})}\nonumber\\
&\leq \frac{\frac{L_H\Delta_{k}^3}{6}+ 2h_{k,g}^{j_{k}}L_g\D_0\Delta_{k}+ 2h_{k,H}^{j_k}L_g \Delta_{k}^2}{m(0)-m(d_{k})}
\label{eq:gg4}
\end{align}
where the second inequality is due to  \eqref{eq:90e1}, Lemma~\ref{lem:2}(a) and  \eqref{er:908}.  {In view of  \eqref{def:h_k2345}, \eqref{def:h_k59}, $\Delta_{k}\leq\Delta_{\max}$ and $\gamma>1$, we have
\begin{equation}
    h_{k,g}^{j_{k}}\leq\left(\dfrac{\Delta_{k}}{\Delta_{\max}}\right)^{2}\quad\text{and}\quad h_{k,H}^{j_{k}}\leq\dfrac{\Delta_{k}}{\Delta_{\max}}.
    \label{eq:gg5}
\end{equation}
Then, combining (\ref{eq:gg4}) and (\ref{eq:gg5}), it follows that 
\begin{align*}
1-\rho_{k}
&\leq\dfrac{\left[\frac{L_{H}}{6}+2\left(\frac{D_{0}}{\Delta_{\max}}+1\right)\left(\frac{L_{g}}{\Delta_{\max}}\right)\right]\Delta_{k}^{3}}{m(0)-m(d_{k})}.
\end{align*}}
On the other hand, it follows from  \eqref{sdcdf1}  and the fact that $-\lambda_{\min}(\nabla^2 f_{\mathcal{H}_k}(x_k))>{4\varepsilon_H}/5$ that
\begin{align*}
 \frac{ 1}{m(0)-m(d_{k})}
 \leq \frac{1}{-\lambda_{\min}(\nabla^2 f_{\mathcal{H}_k}(x_k))\Delta_k^2 }
  \leq \frac{5}{4\varepsilon_H\Delta_k^2 }.
\end{align*}
By combining the last two inequalities, we find that
\begin{align*}
1-\rho_{k}\leq \dfrac{5\left[\frac{L_{H}}{6}+2\left(\frac{D_{0}}{\Delta_{\max}}+1\right)\left(\frac{L_{g}}{\Delta_{\max}}\right)\right]\Delta_{k}}{4\epsilon_{H}}.
\end{align*}}
Therefore, the desired inequality follows now from \eqref{Es:delta12}.
\end{proof}
The following lemma establishes a lower bound for the trust-region radius.

\begin{lemma}\label{lem:51}
Suppose that {\bf (A1)--(A3)} and {\bf (A6)} hold and \( T(\varepsilon_g,\varepsilon_H) \geq 1 \).  Then,
\begin{equation}\label{Es:3411}
 \Delta_k \geq \Delta_{\min}(\varepsilon_g,\varepsilon_H):= {\min}\{\Delta_0, \bar \Delta(\epsilon_{g},\epsilon_{H})/2\}, \quad \forall k\leq T(\varepsilon_g,\varepsilon_H)-1,
\end{equation}
where $\bar \Delta(\epsilon_{g},\epsilon_{H})$ is defined in \eqref{Es:delta12}.
\end{lemma}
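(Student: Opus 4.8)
The plan is to mirror exactly the argument used for Lemma~\ref{lem:5} in the first-order setting, proceeding by induction on $k$ and exploiting the sufficient-decrease condition now furnished by Lemma~\ref{lem:434}. For the base case $k=0$, the bound $\Delta_0\geq\Delta_{\min}(\varepsilon_g,\varepsilon_H)$ holds trivially, since $\Delta_{\min}(\varepsilon_g,\varepsilon_H)=\min\{\Delta_0,\bar\Delta(\varepsilon_g,\varepsilon_H)/2\}\leq\Delta_0$. Assuming the bound holds for some $k$ with $k+1\leq T(\varepsilon_g,\varepsilon_H)-1$, I would establish it for $k+1$ by splitting into two complementary cases according to the size of $\Delta_k$ relative to the threshold $\bar\Delta(\varepsilon_g,\varepsilon_H)$ appearing in Lemma~\ref{lem:434}.

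In the first case, $\Delta_k\leq\bar\Delta(\varepsilon_g,\varepsilon_H)$, so Lemma~\ref{lem:434} applies directly at index $k$ (which is legitimate because $k\leq T(\varepsilon_g,\varepsilon_H)-1$) and yields $\rho_k\geq\alpha$; the iteration is therefore successful, and the update rule of Step~3 of Algorithm~2 gives
\[
\Delta_{k+1}=\min\{2\Delta_k,\Delta_{\max}\}\geq\Delta_k\geq\Delta_{\min}(\varepsilon_g,\varepsilon_H),
\]
where the last inequality is the induction hypothesis. In the second case, $\Delta_k>\bar\Delta(\varepsilon_g,\varepsilon_H)$, and regardless of whether the iteration is successful or not, the radius satisfies $\Delta_{k+1}\geq\tfrac12\Delta_k$, whence
\[
\Delta_{k+1}\geq\tfrac12\Delta_k>\tfrac12\bar\Delta(\varepsilon_g,\varepsilon_H)\geq\Delta_{\min}(\varepsilon_g,\varepsilon_H).
\]
In both cases the bound is propagated to $k+1$, completing the induction.

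The main point to keep track of is that the acceptance threshold $\bar\Delta(\varepsilon_g,\varepsilon_H)$ of Lemma~\ref{lem:434} is a fixed constant (depending only on the problem data and the tolerances), \emph{not} on the running quantity $\|\nabla f_{\mathcal{G}_k}(x_k)\|$ as in Lemma~\ref{lem:4}. This makes the case split here genuinely simpler than in Lemma~\ref{lem:5}: there is no need to convert a bound on $\Delta_k$ into a bound phrased in terms of $\|\nabla f_{\mathcal{G}_k}(x_k)\|$, and the factor of $1/2$ in the definition of $\Delta_{\min}(\varepsilon_g,\varepsilon_H)$ is precisely what absorbs the single possible halving in the second case. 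Consequently, I do not anticipate a substantive obstacle; the only item requiring attention is the bookkeeping on the index range, namely checking that invoking Lemma~\ref{lem:434} at index $k$ is valid throughout the inductive step, which holds because we only ever apply it for $k\leq T(\varepsilon_g,\varepsilon_H)-1$.
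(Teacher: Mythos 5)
Your proof is correct and follows essentially the same route as the paper's: induction on $k$, with the same two-case split on $\Delta_k$ versus the fixed threshold $\bar\Delta(\varepsilon_g,\varepsilon_H)$, invoking Lemma~\ref{lem:434} in the first case and the halving bound $\Delta_{k+1}\geq\tfrac12\Delta_k$ in the second. Your version is in fact marginally cleaner, since you write $\min\{2\Delta_k,\Delta_{\max}\}\geq\Delta_k$ directly (the paper's intermediate step $\min\{2\Delta_k,\Delta_{\max}\}\geq 2\Delta_k$ is not literally valid when $2\Delta_k>\Delta_{\max}$, though harmless), and your remark contrasting the fixed threshold here with the gradient-dependent threshold in Lemma~\ref{lem:4} matches the actual structural difference between the two proofs.
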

\begin{proof}
   Clearly, \eqref{Es:3411}) is true for  $k=0$.  Suppose that \eqref{Es:3411}) holds for some $k\geq 0$, and let us prove that the inequality also holds for $k+1$. We consider two case:
   \\[2mm]
   \textbf{Case I:} $\Delta_{k} \leq \bar \Delta(\varepsilon_g,\varepsilon_H)$.
   \\[0.2cm]
   In this case,  it follows from Lemma~\ref{lem:434} that  $\rho_k\geq \alpha$, which in turn implies that 
   \[
   \Delta_{k+1}=\min\left\{2\Delta_{k},\Delta_{\max}\right\}\geq 2\Delta_{k}\geq \Delta_{k} \geq \Delta_{\min}(\varepsilon_g,\varepsilon_H),
   \]
   where the last inequality is due to the induction hypothesis. Thus,   \eqref{Es:3411}) is true  for $k+1$.
\\[2mm]
\textbf{Case II:} $\Delta_{k} \geq  \bar\Delta(\varepsilon_g,\varepsilon_H)$. 
\\[0.2cm]
Since the {trust-region} radius in Algorithm~1 satisfies  $\Delta_{k+1}\geq \frac12\Delta_{k}$, it follows that
   \[
   \Delta_{k+1}\geq \frac12\Delta_{k} \geq  \frac{\bar\Delta(\epsilon_{g},\epsilon_{H})}2\geq  \Delta_{\min}(\varepsilon_g, \varepsilon_H),
   \]
proving  \eqref{Es:3411}) for $k+1$. 
\end{proof}

In the next two lemmas, we establish upper bounds for \( |\mathcal{S}_{T(\varepsilon_g, \varepsilon_H)-1}| \) and \( |\mathcal{U}_{T(\varepsilon_g, \varepsilon_H)-1}| \).
\begin{lemma}
\label{lem:3.4541}
Suppose that {\bf (A1)--(A3)} and {\bf (A5)} hold and \( T(\varepsilon_g,\varepsilon_H) \geq 1 \).  Then
\begin{equation}
|\mathcal{S}_{T(\varepsilon_g,\varepsilon_H)-1}|\leq \frac{5}{2\alpha}(f(x_{0})-f_{low})\max\left\{\epsilon_{g}^{-1}\Delta_{\min}(\epsilon_{g},\epsilon_{H})^{-1},\epsilon_{H}^{-1}\Delta_{\min}(\epsilon_{g},\epsilon_{H})^{-2}\right\},
\label{eq:3.111}
\end{equation}
where $\Delta_{\min}(\epsilon_{g},\epsilon_{H})$ is defined in (\ref{Es:3411}).
\end{lemma}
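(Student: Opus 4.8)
The plan is to bound the number of successful iterations by summing the guaranteed function decreases over $\mathcal{S}_{T(\varepsilon_g,\varepsilon_H)-1}$ and comparing against the total available decrease $f(x_0)-f_{low}$, exactly mirroring the proof of Lemma~\ref{lem:3.454}. First I would fix an arbitrary $k\in\mathcal{S}_{T(\varepsilon_g,\varepsilon_H)-1}$, so that $\rho_k\geq\alpha$, and use \eqref{ratio_generalset} to write $f(x_k)-f(x_{k+1})\geq\alpha\left[m(0)-m(d_k)\right]$. The key difference from the first-order case is that the model-decrease lower bound \eqref{sdcdf1} is now a \emph{maximum} of two terms, one governed by the sampled gradient and one by the most negative sampled-Hessian eigenvalue. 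Since $k<T(\varepsilon_g,\varepsilon_H)$, the inner loop of Algorithm~2 guarantees that at least one of the triggering conditions \eqref{graSubcond2354} or \eqref{graSubcond2245} held, i.e. either $\|\nabla f_{\mathcal{G}_k}(x_k)\|>4\varepsilon_g/5$ or $-\lambda_{\min}(\nabla^2 f_{\mathcal{H}_k}(x_k))>4\varepsilon_H/5$.

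Next I would split into these two cases and produce a clean lower bound on $m(0)-m(d_k)$ in each. In the gradient case, combining the first branch of \eqref{sdcdf1} with $\|\nabla f_{\mathcal{G}_k}(x_k)\|>4\varepsilon_g/5$, the bound $\|\nabla^2 f_{\mathcal{H}_k}(x_k)\|\leq L_g$ from \eqref{Li:Hessi}, and the radius lower bound $\Delta_k\geq\Delta_{\min}(\varepsilon_g,\varepsilon_H)$ from Lemma~\ref{lem:51}, I obtain a decrease of order $\varepsilon_g\min\{\Delta_{\min},\varepsilon_g/L_g\}$, which after absorbing constants gives a term proportional to $\varepsilon_g\Delta_{\min}(\varepsilon_g,\varepsilon_H)$. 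In the negative-curvature case, the second branch of \eqref{sdcdf1} together with $-\lambda_{\min}(\nabla^2 f_{\mathcal{H}_k}(x_k))>4\varepsilon_H/5$ and $\Delta_k\geq\Delta_{\min}(\varepsilon_g,\varepsilon_H)$ yields a decrease proportional to $\varepsilon_H\Delta_{\min}(\varepsilon_g,\varepsilon_H)^2$. Taking the minimum over both cases, every successful iteration reduces $f$ by at least
\[
\frac{2\alpha}{5}\min\left\{\varepsilon_g\,\Delta_{\min}(\varepsilon_g,\varepsilon_H),\ \varepsilon_H\,\Delta_{\min}(\varepsilon_g,\varepsilon_H)^{2}\right\},
\]
where I will double-check the exact numerical constants against \eqref{sdcdf1}.

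Finally, as in Lemma~\ref{lem:3.454}, I would telescope the objective values: since $f(x_{k+1})=f(x_k)$ on unsuccessful iterations and $f$ decreases by at least the above quantity on each successful one, the inequality $f(x_0)-f_{low}\geq f(x_0)-f(x_{T(\varepsilon_g,\varepsilon_H)})$ gives
\[
f(x_0)-f_{low}\geq |\mathcal{S}_{T(\varepsilon_g,\varepsilon_H)-1}|\cdot\frac{2\alpha}{5}\min\left\{\varepsilon_g\,\Delta_{\min},\ \varepsilon_H\,\Delta_{\min}^{2}\right\}.
\]
Solving for the cardinality and using $\min\{a,b\}^{-1}=\max\{a^{-1},b^{-1}\}$ recovers the claimed bound \eqref{eq:3.111}, with the $\max\{\varepsilon_g^{-1}\Delta_{\min}^{-1},\varepsilon_H^{-1}\Delta_{\min}^{-2}\}$ structure. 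The main obstacle I anticipate is purely bookkeeping rather than conceptual: one must be careful that the case split comes from the inner-loop \emph{termination} conditions \eqref{graSubcond2354}--\eqref{graSubcond2245} (not from assumptions on the true derivatives), and that the constant $2\alpha/5$ and the correct power of $\Delta_{\min}$ are tracked so that the final $\max$ lands precisely as stated; matching the factor of $5/(2\alpha)$ in \eqref{eq:3.111} is the one place where an off-by-a-constant slip is easy.
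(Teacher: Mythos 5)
Your proposal is correct and follows essentially the same route as the paper's proof: the same case split driven by the inner-loop termination conditions \eqref{graSubcond2354}--\eqref{graSubcond2245}, the same per-iteration decrease bounds $\tfrac{2\alpha}{5}\varepsilon_g\Delta_{\min}$ and (up to the factor $4/5$ versus $2/5$) $\varepsilon_H\Delta_{\min}^2$ obtained from \eqref{sdcdf1}, \eqref{Li:Hessi} and Lemma~\ref{lem:51}, and the same telescoping-plus-inversion step via $\min\{a,b\}^{-1}=\max\{a^{-1},b^{-1}\}$. The only bookkeeping detail you flagged but did not carry out, absorbing $\min\{\Delta_k,4\varepsilon_g/(5L_g)\}\geq\Delta_{\min}(\varepsilon_g,\varepsilon_H)$, does go through because $\Delta_{\min}\leq\bar\Delta(\varepsilon_g,\varepsilon_H)/2\leq\varepsilon_g/(5L_g)$ by \eqref{Es:delta12} and \eqref{Es:3411}, exactly as in the paper.
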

\begin{proof}
  Let $k\in S_{T(\varepsilon_g,\varepsilon_H)-1}$, that is, 
$\rho_{k}\geq\alpha$.  Since $k\leq T(\epsilon_{g},\epsilon_{H})$ we have two possibilities:\\[2mm]
{\bf Case I:} $\|\nabla f_{\mathcal{G}_k}(x_k)\|> 4\varepsilon_g/5$.
\\[0.2cm]
In this case, by \eqref{ratio_generalset}, \eqref{sdcdf1},   \eqref{Li:Hessi} and Lemma~\ref{lem:51}, we have
\begin{eqnarray}
f(x_{k})-f(x_{k+1})&\geq &\alpha\left[m(0)-m(d_{k})\right]\nonumber\\
&\geq &\frac\alpha2\|\nabla f_{\mathcal{G}_k}(x_k)\|\min\left\{\Delta_k,\frac{\|\nabla f_{\mathcal{G}_k}(x_k)\|}{\|\nabla^2 f_{\mathcal{H}_k}(x_k)\|}\right\} \nonumber\\
&\geq &\frac{2\alpha\varepsilon_g}{5}\min\left\{\Delta_k,\frac{4\varepsilon_g}{5L_g}\right\}\nonumber \\
&\geq &{\frac{2\alpha\varepsilon_g}{5}\Delta_{\min}(\epsilon_{g},\epsilon_{H})}
\label{eq:789}
\end{eqnarray}
{\textbf{Case II:}} $-\lambda_{\min}(\nabla^2 f_{\mathcal{H}_k}(x_k))>{4\varepsilon_H}/5$. 
\\[0.2cm]
In this case, by \eqref{ratio_generalset}, \eqref{sdcdf1}, and Lemma~\ref{lem:51}, we have
\begin{eqnarray}
f(x_{k})-f(x_{k+1})&\geq &\alpha\left[m(0)-m(d_{k})\right]\nonumber\\
&\geq &-\alpha\lambda_{\min}(\nabla^2 f_{\mathcal{H}_k}(x_k))\Delta_k^2\nonumber\\
&\geq &\frac{4\alpha\varepsilon_H\Delta_k^2}{5}\nonumber\\
&\geq &{\frac{4\alpha\epsilon_{H}}{5}\Delta_{\min}(\epsilon_{g},\epsilon_{H})^{2}.}
\label{eq:gg6}
\end{eqnarray}
Thus, in view of (\ref{eq:789}) and (\ref{eq:gg6}), we conclude that
\begin{equation}
f(x_{k})-f(x_{k+1})\geq  \frac{2\alpha}{5}\min\left\{\epsilon_{g}\Delta_{\min}(\epsilon_{g},\epsilon_{H}),\epsilon_{H}\Delta_{\min}(\epsilon_{g},\epsilon_{H})^{2}\right\} \quad \forall k\in\mathcal{S}_{T(\varepsilon_g,\varepsilon_H)-1}.
\label{eq:gg7}
\end{equation}
Let $\mathcal{S}_{T(\varepsilon_g,\varepsilon_H)-1}^{c}=\left\{0,1,\ldots,T(\varepsilon_g,\varepsilon_H)-1\right\}\setminus \mathcal{S}_{T(\varepsilon_g,\varepsilon_H)-1}$. Notice that, when $k\in\mathcal{S}_{T(\varepsilon_g,\varepsilon_H)-1}^{c}$, then $f(x_{k+1})=f(x_{k})$. Thus, it follows from  \eqref{inf:f} and (\ref{eq:gg7}) that
\begin{eqnarray*}
f(x_{0})-f_{low}&\geq & f(x_{0})-f(x_{T(\varepsilon_g,\varepsilon_H)})=\sum_{k=0}^{T(\varepsilon_g,\varepsilon_H)-1}f(x_{k})-f(x_{k+1})\\
&= & \sum_{k\in\mathcal{S}_{T(\varepsilon_g,\varepsilon_H)-1}}f(x_{k})-f(x_{k+1})+\sum_{k\in\mathcal{S}_{T(\varepsilon_g,\varepsilon_H)-1}^{c}}f(x_{k})-f(x_{k+1})\\
&=&\sum_{k\in\mathcal{S}_{T(\varepsilon_g,\varepsilon_H)-1}}f(x_{k})-f(x_{k+1})\\
&\geq &|\mathcal{S}_{T(\varepsilon_g,\varepsilon_H)-1}|{\frac{2\alpha}{5}\min\left\{\epsilon_{g}\Delta_{\min}(\epsilon_{g},\epsilon_{H}),\epsilon_{H}\Delta_{\min}(\epsilon_{g},\epsilon_{H})^{2}\right\},}
\end{eqnarray*}
which implies that (\ref{eq:3.111}) is true.  
\end{proof}


\begin{lemma}
\label{lem:3.5781}
Suppose that {\bf (A1)--(A3)} and {\bf (A6)} hold  and \( T(\varepsilon_g,\varepsilon_H) \geq 1 \).  Then, 
\begin{equation}
|\mathcal{U}_{T(\varepsilon_g,\varepsilon_H)-1}|\leq{\log_{2}\left(\frac{\Delta_{\max}}{\Delta_{\min}(\epsilon_{g},\epsilon_{H})}\right)}+|\mathcal{S}_{T(\varepsilon_g,\varepsilon_H)-1}|,
\label{eq:3.131}
\end{equation}
where $\Delta_{\min}(\epsilon_{g},\epsilon_{H})$ is defined in (\ref{Es:3411}).
\end{lemma}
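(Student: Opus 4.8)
The plan is to mirror the structure of Lemma~\ref{lem:3.578} from the first-order analysis, since the trust-region radius obeys exactly the same multiplicative update rules in Algorithm~2. First I would record that, by Step~3 of Algorithm~2 (which inherits the update rule from Step~4 of Algorithm~1), the radius is halved on every unsuccessful iteration and at most doubled on every successful one, so $\Delta_{k+1}=\tfrac12\Delta_k$ for $k\in\mathcal{U}_{T(\varepsilon_g,\varepsilon_H)-1}$ and $\Delta_{k+1}\leq 2\Delta_k$ for $k\in\mathcal{S}_{T(\varepsilon_g,\varepsilon_H)-1}$. Setting $\omega_k:=1/\Delta_k$ converts these into $\omega_{k+1}=2\omega_k$ on unsuccessful steps and $\omega_{k+1}\geq\tfrac12\omega_k$ on successful steps.

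Next I would chain these relations across all iterations $k=0,\dots,T(\varepsilon_g,\varepsilon_H)-1$. Each unsuccessful step multiplies $\omega$ by exactly $2$ and each successful step multiplies it by at least $\tfrac12$, so telescoping gives
\begin{equation*}
2^{|\mathcal{U}_{T(\varepsilon_g,\varepsilon_H)-1}|}\left(\tfrac12\right)^{|\mathcal{S}_{T(\varepsilon_g,\varepsilon_H)-1}|}\omega_0\leq\omega_{T(\varepsilon_g,\varepsilon_H)}.
\end{equation*}
I would then invoke Lemma~\ref{lem:51}, which guarantees $\Delta_k\geq\Delta_{\min}(\varepsilon_g,\varepsilon_H)$ for all $k\leq T(\varepsilon_g,\varepsilon_H)-1$, to bound $\omega_{T(\varepsilon_g,\varepsilon_H)}\leq\Delta_{\min}(\varepsilon_g,\varepsilon_H)^{-1}$. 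Taking base-$2$ logarithms and rearranging yields
\begin{equation*}
|\mathcal{U}_{T(\varepsilon_g,\varepsilon_H)-1}|-|\mathcal{S}_{T(\varepsilon_g,\varepsilon_H)-1}|\leq\log_2\!\left(\frac{\Delta_0}{\Delta_{\min}(\varepsilon_g,\varepsilon_H)}\right).
\end{equation*}

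Finally, since $\Delta_0\leq\Delta_{\max}$ by the initialization in Step~0, I would bound $\log_2(\Delta_0/\Delta_{\min})\leq\log_2(\Delta_{\max}/\Delta_{\min})$ and move $|\mathcal{S}_{T(\varepsilon_g,\varepsilon_H)-1}|$ to the right-hand side to obtain \eqref{eq:3.131}. The argument is almost entirely routine once the radius update rules are in hand; the only point requiring slight care is ensuring Lemma~\ref{lem:51} applies at the terminal index (it is stated for $k\leq T(\varepsilon_g,\varepsilon_H)-1$, but the telescoped inequality needs $\omega_{T(\varepsilon_g,\varepsilon_H)}$, so I would note that the lower bound $\Delta_{T(\varepsilon_g,\varepsilon_H)}\geq\Delta_{\min}$ still holds because $\Delta_{T(\varepsilon_g,\varepsilon_H)}\geq\tfrac12\Delta_{T(\varepsilon_g,\varepsilon_H)-1}\geq\tfrac12\Delta_{\min}$, or more simply track the telescoping only up to index $T(\varepsilon_g,\varepsilon_H)-1$ and use $\omega_{T(\varepsilon_g,\varepsilon_H)-1}$). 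I do not anticipate any genuine obstacle, as this is a direct adaptation of the first-order radius-counting argument to the two-tolerance setting, with $\Delta_{\min}(\varepsilon_g,\varepsilon_H)$ replacing $\Delta_{\min}(\varepsilon_g)$.
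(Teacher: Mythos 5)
Your proof is correct and takes essentially the same route as the paper, which simply cites the telescoping argument of Lemma~\ref{lem:3.578} (yielding $|\mathcal{U}_{T(\varepsilon_g,\varepsilon_H)-1}|-|\mathcal{S}_{T(\varepsilon_g,\varepsilon_H)-1}|\leq\log_{2}\left(\Delta_{0}/\Delta_{\min}(\varepsilon_g,\varepsilon_H)\right)$) and then concludes via $\Delta_{\min}(\varepsilon_g,\varepsilon_H)\leq\Delta_{0}\leq\Delta_{\max}$, exactly as you do. On your terminal-index concern: both of your fallback fixes would cost a harmless additive $+1$, but neither is needed, since the induction proving Lemma~\ref{lem:51} invokes Lemma~\ref{lem:434} only at indices $k\leq T(\varepsilon_g,\varepsilon_H)-1$ and therefore already gives $\Delta_{T(\varepsilon_g,\varepsilon_H)}\geq\Delta_{\min}(\varepsilon_g,\varepsilon_H)$, which is what the paper implicitly uses.
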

\begin{proof}
Using a similar argument as in Lemma~\ref{lem:3.578} (see \eqref{eq:pr45}), we obtain
    \begin{equation*}
\left|\mathcal{U}_{T(\varepsilon_g)-1}\right|-|\mathcal{S}_{T(\varepsilon_g)-1}|\leq\log_{2}\left(\frac{\Delta_{0}}{\Delta_{\min}(\varepsilon_g,\varepsilon_H)}\right).
\end{equation*}
Then, the conclusion follows from the choice $\Delta_{\min}(\varepsilon_g,\varepsilon_H) \leq \Delta_{0} \leq \Delta_{\max}$.
\end{proof}


Finally, combining the two previous lemmas with (\ref{eq:ggextra}), we derive the following iteration-complexity bound.

\begin{theorem} \label{thm:3.11}
Suppose that {\bf (A1)--(A3)} and {\bf (A6)}  hold,  and let $T(\varepsilon_g,\varepsilon_H)$ be as in  \eqref{eq:defhitting}. Then,
\begin{align*}
T(\varepsilon_g,\varepsilon_H)&\leq \frac{5}{\alpha}(f(x_{0})-f_{low})\max\left\{\epsilon_{g}^{-1}\Delta_{\min}(\epsilon_{g},\epsilon_{H})^{-1},\epsilon_{H}^{-1}\Delta_{\min}(\epsilon_{g},\epsilon_{H})^{-2}\right\}\\
&+\log_{2}\left(\frac{\Delta_{\max}}{\Delta_{\min}(\epsilon_{g},\epsilon_{H})}\right)+1
\label{eq:3.231}
\end{align*}
where $\Delta_{\min}(\epsilon_{g},\epsilon_{H})$ is defined in (\ref{Es:3411}).
\end{theorem}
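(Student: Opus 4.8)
The plan is to mirror the proof of Theorem~\ref{thm:3.1}, since the second-order complexity bound decomposes in exactly the same fashion into a count of successful and unsuccessful iterations. First I would dispose of the trivial case: if $T(\varepsilon_g,\varepsilon_H)\leq 1$, then the right-hand side of the asserted inequality is at least $1$ (thanks to the trailing $+1$), so the bound holds automatically. This permits assuming $T(\varepsilon_g,\varepsilon_H)\geq 2$ for the remainder, which is precisely the regime in which \eqref{eq:ggextra} is valid.

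The heart of the argument is the decomposition \eqref{eq:ggextra},
\begin{equation*}
T(\varepsilon_g,\varepsilon_H)\leq \left|\mathcal{S}_{T(\varepsilon_g,\varepsilon_H)-1}\right|+\left|\mathcal{U}_{T(\varepsilon_g,\varepsilon_H)-1}\right|.
\end{equation*}
Into this I would substitute the unsuccessful-iteration bound of Lemma~\ref{lem:3.5781}, namely
\begin{equation*}
\left|\mathcal{U}_{T(\varepsilon_g,\varepsilon_H)-1}\right|\leq \log_{2}\!\left(\frac{\Delta_{\max}}{\Delta_{\min}(\varepsilon_g,\varepsilon_H)}\right)+\left|\mathcal{S}_{T(\varepsilon_g,\varepsilon_H)-1}\right|.
\end{equation*}
The two copies of $\left|\mathcal{S}_{T(\varepsilon_g,\varepsilon_H)-1}\right|$ then combine, leaving $2\left|\mathcal{S}_{T(\varepsilon_g,\varepsilon_H)-1}\right|$ plus the logarithmic term. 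Feeding in the successful-iteration bound of Lemma~\ref{lem:3.4541}, whose constant $\tfrac{5}{2\alpha}$ is thereby doubled to $\tfrac{5}{\alpha}$, reproduces the first two terms of the stated inequality, and the $+1$ absorbs the trivial case handled above.

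Because both Lemma~\ref{lem:3.4541} and Lemma~\ref{lem:3.5781} are already in hand, this final step is a purely mechanical assembly with no genuine analytic obstacle; all the real work sits upstream, in Lemma~\ref{lem:51}, where the lower bound $\Delta_k\geq\Delta_{\min}(\varepsilon_g,\varepsilon_H)$ prevents the radius from collapsing, and in the two-case success criterion of Lemma~\ref{lem:434}. The only extra algebra one might wish to perform is to unfold $\Delta_{\min}(\varepsilon_g,\varepsilon_H)=\min\{\Delta_0,\bar\Delta(\varepsilon_g,\varepsilon_H)/2\}$ from \eqref{Es:3411} and \eqref{Es:delta12}: since $\bar\Delta(\varepsilon_g,\varepsilon_H)$ scales like $\min\{\varepsilon_g,\varepsilon_H\}$ up to constants, substituting into the factor $\max\{\varepsilon_g^{-1}\Delta_{\min}(\varepsilon_g,\varepsilon_H)^{-1},\varepsilon_H^{-1}\Delta_{\min}(\varepsilon_g,\varepsilon_H)^{-2}\}$ recovers the $\mathcal{O}(\max\{\varepsilon_g^{-2}\varepsilon_H^{-1},\varepsilon_H^{-3}\})$ rate advertised in the abstract.
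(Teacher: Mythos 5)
Your proposal is correct and follows exactly the route the paper takes: the paper derives Theorem~\ref{thm:3.11} by combining the decomposition \eqref{eq:ggextra} with Lemmas~\ref{lem:3.4541} and~\ref{lem:3.5781}, so that the two copies of $|\mathcal{S}_{T(\varepsilon_g,\varepsilon_H)-1}|$ double the constant $\tfrac{5}{2\alpha}$ to $\tfrac{5}{\alpha}$, with the $+1$ covering the trivial case $T(\varepsilon_g,\varepsilon_H)\leq 1$. Your concluding remark on unfolding $\Delta_{\min}(\varepsilon_g,\varepsilon_H)$ likewise matches the paper's post-theorem discussion leading to the $\mathcal{O}\left(\max\left\{\varepsilon_g^{-2}\varepsilon_H^{-1},\varepsilon_H^{-3}\right\}\right)$ rate.
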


If additionally {\bf (A5)} holds, in view of   
 of (\ref{Es:3411}) and (\ref{Es:delta12}) , we have
\begin{eqnarray*}
    \Delta_{\min}(\epsilon_{g},\epsilon_{H})&=&\min\left\{\dfrac{(1-\alpha)\epsilon_{g}}{5\left[1+2\left(\frac{D_{0}}{\Delta_{\max}}\right)\right]L_{g}},\dfrac{2(1-\alpha)\epsilon_{H}}{5\left[\frac{L_{H}}{6}+2\left(\frac{D_{0}}{\Delta_{\max}}+1\right)\left(\frac{L_{g}}{\Delta_{\max}}\right)\right]}\right\}\\
    &=&\min\left\{\dfrac{(1-\alpha)\epsilon_{g}}{5\left[1+2\left(\frac{D_{0}}{\Delta_{\max}}\right)\right]L_{g}},\dfrac{2(1-\alpha)\epsilon_{H}}{\left[\frac{1}{6}+2\left(\frac{D_{0}}{\Delta_{\max}}+1\right)\left(\frac{L_{g}L_{H}^{-1}}{\Delta_{\max}}\right)\right]L_{H}}\right\}\\
    &=&\mathcal{O}\left(\min\left\{\frac{\epsilon_{g}}{L_{g}},\frac{\epsilon_{H}}{L_{H}}\right\}\right),
\end{eqnarray*}
and so
\begin{equation*}
    \Delta_{\min}(\epsilon_{g},\epsilon_{H})^{-1}=\mathcal{O}\left(\max\left\{L_{g}\epsilon_{g}^{-1},L_{H}\epsilon_{H}^{-1}\right\}\right)
\end{equation*}
Then, it follows from Theorem \ref{thm:3.11} that Algorithm 2 takes no more than 
\begin{equation}
\mathcal{O}\left((f(x_{0})-f_{low})\max\left\{L_{g}\epsilon_{g}^{-2},L_{H}\epsilon_{g}^{-1}\epsilon_{H}^{-1},L_{g}^{2}\epsilon_{g}^{-2}\epsilon_{H}^{-1},L_{H}^{2}\epsilon_{H}^{-3}\right\}\right)
\label{eq:gg8}
\end{equation}
iterations to find an $(\epsilon_{g},\epsilon_{H})$-approximate second-order critical point of $f$. Without loss of generality, we can assume that {$L_{g},L_{H}\geq 1$}. Then, taking $\epsilon_{g}$ and $\epsilon_{H}$ such that {$0<\epsilon_{H}\leq\epsilon_{g}<1$}, we get
\begin{equation*}
    L_{g}\epsilon_{g}^{-1}\leq L_{g}^{2}\epsilon_{g}^{-2}\epsilon_{H}^{-1}\quad\text{and}\quad L_{H}\epsilon_{g}^{-1}\epsilon_{H}^{-1}\leq L_{H}^{2}\epsilon_{H}^{-3}.
\end{equation*}
In this case, the complexity bound (\ref{eq:gg8}) reduces to 
\begin{equation*}
    \mathcal{O}\left((f(x_{0})-f_{low})\max\left\{L_{g}^{2}\epsilon_{g}^{-2}\epsilon_{H}^{-1},L_{H}^{2}\epsilon_{H}^{-3}\right\}\right).
\end{equation*}


\section{Illustrative Numerical Results}
\label{NumSec}

In this section, we report preliminary numerical results comparing an implementation of Algorithm~1 (referred to as \texttt{STR}) against an implementation of the standard trust-region method (referred to as \texttt{TR}). Both methods were applied to minimize the function
\begin{equation}
\min_{x \in \mathbb{R}^d} f(x) \equiv \frac{1}{d} \sum_{i=1}^d f_i(x), \quad \text{where} \quad f_i(x) = \left(d - \sum_{j=1}^f \cos(x_j) + i \bigl(1 - \cos(x_i)\bigr) - \sin(x_i)\right)^{2},
\label{eq:test_function}
\end{equation}
starting from the initial point \( x_0 = (1, \ldots, 1) \in \mathbb{R}^d \) and using the stopping criterion
\begin{equation}
\|\nabla f(x_k)\|_2 \leq \varepsilon_g=10^{-5}.
\label{eq:stopping_criterion}
\end{equation}
It is worth pointing out that the evaluation of the full gradient $\nabla f(x_k)$ in \texttt{STR} was done only to ensure a fair stopping criterion with the \texttt{TR}. These evaluations were not taken into account in the performance measure defined below. The other initialization parameters for the algorithms were set as $\Delta_0 = 1$, $\Delta_{\max} = 50$, $\gamma = 1.1$, and $\alpha = 10^{-4}$.
 In \texttt{TR}, the Hessian approximations $B_{k}$ were computed using the safeguarded BFGS formula:
 \begin{equation}
     B_{k}=\left\{\begin{array}{ll} I,&\text{if $k=0$,}\\
                                   B_{k-1}&\text{if $k\geq 1$ and $s_{k-1}^{T}y_{k-1}\leq 0$,}\\
                                   B_{k-1}+\frac{y_{k-1}y_{k-1}^{T}}{s_{k-1}^{T}y_{k-1}}-\dfrac{B_{k-1}s_{k-1}s_{k-1}^{T}B_{k-1}}{s_{k-1}^{T}B_{k-1}s_{k-1}},&\text{if $k\geq 1$ and $s_{k-1}^{T}y_{k-1}>0$,}
     \end{array}
     \right.
     \label{eq:bfgs}
 \end{equation}
 where $s_{k-1}=x_{k}-x_{k-1}$ and $y_{k-1}=\nabla f(x_{k})-\nabla f(x_{k-1})$. On the other hand, in the \texttt{STR} method, the Hessian approximations \( B_k \) were also computed using~(\ref{eq:bfgs}), but with vector \( y_k \) replaced by \( \hat{y}_k = \nabla f_{\mathcal{G}_k}(x_k) - \nabla f_{\mathcal{G}_{k-1}} (x_{k-1}) \). Furthermore, in each implementation, the trust-region subproblems are approximately solved using the Dogleg method. 

In Algorithm 1, one can choose any subset \(\mathcal{G}_k^j \subset \mathcal{N}\) in Step 1.1, provided it has the prescribed cardinality. At the \(k\)th iteration of the STR method, we consider the following strategy. We Set $|\mathcal{G}_{k}^j|=\ceil{(1-h^j_{k,g})|\N|}$. If \(k=0\) or \(k-1 \in \mathcal{S}\), we first identify an ordering \((i_1^k, \ldots, i_n^k)\) such that  
\[
f_{i_1^k}(x_k) \geq f_{i_2^k}(x_k) \geq \cdots \geq f_{i_n^k}(x_k),
\]
and then, for every \(j = 0, \ldots, j_k\), we choose  
\begin{equation}
\mathcal{G}_k^j = \left\{i_1^k, \ldots, i_{|\mathcal{G}_k^j|}^k\right\}.
\label{eq:choice_g}
\end{equation}  
On the other hand, if \(k-1 \in \mathcal{U}\), then the iterate does not change, i.e., \(x_{k+1} = x_k\). To reuse all previously computed gradients in this case, we select \(\mathcal{G}_k^j\) as in \eqref{eq:choice_g} but using the ordering from the previous iteration, that is,  
\[
i_\ell^k = i_\ell^{k-1} \quad \text{for } \ell = 1, \ldots, n.
\]
In this way, the total number of evaluations of \(\nabla f_i(\,\cdot\,)\) performed by \texttt{STR} from iterations \(0\) to \(T\) is equal to
\[
GE_T(\small\texttt{STR}\normalsize) := \sum_{k \in \mathcal{S}_T} |\mathcal{G}_k|,
\]
with \(1 \leq |\mathcal{G}_k| \leq d\), while the corresponding number of evaluations of \(\nabla f_i(\,\cdot\,)\) performed by \texttt{TR} is
\[
GE_T(\small\texttt{TR}\normalsize) := |\mathcal{S}_T| \times d.
\]
If each \(\nabla f_i(\cdot)\) is computed using reverse-mode Automatic Differentiation, it is reasonable to assume that its computational cost is approximately three times that of evaluating \(f_i(\cdot)\) once (see, e.g., \cite{GRIEWANK,AD1}). Since evaluating the full function $f(\,\cdot\,)$ requires computing all \(d\) component functions \(f_i(\,\cdot\,)\), we measure the \textit{total computational} cost up to iteration \(T\) in terms of equivalent evaluations of \(f_i(\,\cdot\,)\) as  
\begin{equation}
\text{Cost}_T = \left(FE_T \times d\right) + \left(3 \times GE_T\right),
\label{eq:cost}
\end{equation}
where \(FE_T\) is the number of full function evaluations \(f(\,\cdot\,)\) and \(GE_T\) is the number of component gradient evaluations \(\nabla f_i(\,\cdot\,)\) performed up to iteration \(T\). 

Table \ref{tab:1} reports the total cost incurred by \texttt{TR} and \texttt{STR} to compute an iterate \(x_k\) satisfying the stopping criterion~\eqref{eq:stopping_criterion}, for $f(\,\cdot\,)$ defined by (\ref{eq:test_function}) with various values of $d$. 
\small
\begin{table}[h!]
\centering
\begin{tabular}{|c|c|c|c|}
\hline
$d$ & $\text{Cost}(\texttt{TR})$ & $\text{Cost}(\texttt{STR})$ & Reduction \\ 
\hline
100 & 35,900  & 34,292 & 4\% \\\hline
500 & 194,500 & 117,097  & 39\%  \\\hline
1,000 & 626,000  & 419,053 & 33\% \\\hline
3,000 & 1,488,000 & 736,395 & 50\%  \\\hline
\end{tabular}
\caption{Comparison of iterations and cost between \texttt{TR} and \texttt{STR} across different problems.}
\label{tab:1}
\end{table}
\normalsize

As shown, \texttt{STR} can lead to a significant reduction in computational cost compared to \texttt{TR}, in terms of the cost metric defined in~(\ref{eq:cost}). This is because exact gradients are rarely required during the exacution of \texttt{STR}. Figure 1 illustrates how the sample size $|\mathcal{G}_{k}|$ evolved over the iterations for the problem with $d=3,000$ components. In this case, during several iterations, acceptable inexact gradients were computed using as few as 273 components, resulting in a 50\% reduction in the total computational cost.

\begin{figure}[h!]
    \centering
    \includegraphics[width=0.54\textwidth]{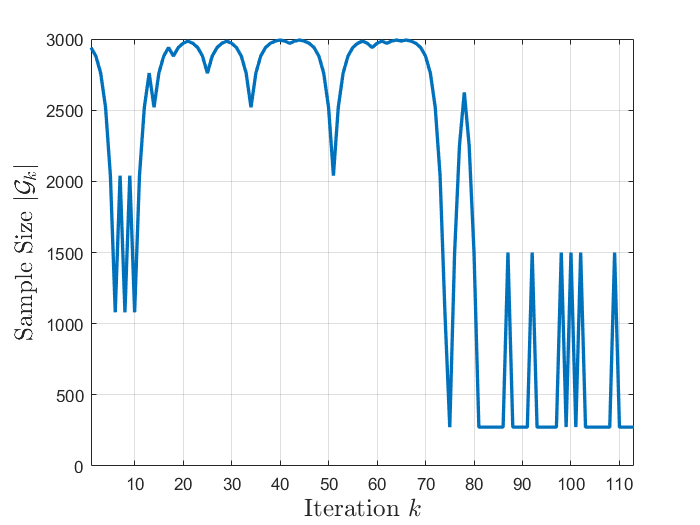} 
    \caption{Evolution of sample sizes for problem (\ref{eq:test_function}) with $d=3,000$.}
    \label{fig:sample}
\end{figure}

\section{Conclusion}
\label{concluding}

In this work, we introduced and analyzed sub-sampled trust-region methods  for solving finite-sum optimization problems. By employing random subsampling strategies to approximate the gradient and Hessian, these methods effectively reduce computational costs while maintaining theoretical guarantees. 
We established worst-case iteration complexity bounds for achieving approximate solutions. Specifically, we demonstrated that an \(\varepsilon_g\)-approximate first-order stationary point can be obtained in at most \(\mathcal{O}(\varepsilon_g^{-2})\) iterations and  an \((\varepsilon_g, \varepsilon_H)\)-approximate second-order stationary point is achievable within $\mathcal{O}(\max\{\varepsilon_{g}^{-2}\varepsilon_{H}^{-1},\varepsilon_{H}^{-3}\})$ iterations.
Numerical experiments confirmed the effectiveness of the proposed subsampling technique, highlighting its practical potential in solving finite-sum optimization problems.



\end{document}